\newtheorem{thm}{Theorem}[section]
\newtheorem{prop}[thm]{Proposition}
\newtheorem{lem}[thm]{Lemma}
\newtheorem{cor}[thm]{Corollary}
\newtheorem*{thmnn}{Theorem}
\theoremstyle{definition}
\newtheorem{defn}[thm]{Definition}
\newtheorem{ex}[thm]{Example}
\newtheorem{rmk}[thm]{Remark}
\tikzset{Dotted/.style={     line width=\pgfkeysvalueof{/tikz/Young/dot size},     dash pattern=on 0.001\pgflinewidth off #1,line cap=round,     shorten <=#1},Dotted/.default=3pt,     vdots/.style={draw=none,path picture={      \draw let \p1=(path picture bounding box.north),         \p2=(path picture bounding box.south) in        [Dotted={(\y1-\y2)/4}]         (\p1) -- (\p2);     }},     cdots/.style={draw=none,path picture={      \draw let \p1=(path picture bounding box.east),         \p2=(path picture bounding box.west) in         [Dotted={(\x1-\x2)/4}]         (\p2) -- (\p1);     }},     Young tableau/.style={matrix of math nodes,nodes in empty cells,     nodes={draw,minimum size=\pgfkeysvalueof{/tikz/Young/cell size}, inner sep=0.5pt},     column sep=-\pgflinewidth,row sep=-\pgflinewidth},     Young/.cd,cell size/.initial=1.5em,     dot size/.initial=1.2pt }
\title{Polynomial Expressions for Symmetric Group Characters on Cycles}
\author{Tom Moshaiov and Shaul Zemel}
\begin{document}

\date{}

\maketitle{}

\begin{abstract}
In \cite{[CZ]}, Cohen and Zemel showed that for a partition $\lambda \vdash k$, the dimension of the irreducible representation of $S_{n}$ corresponding to the partition $(n-k,\lambda) \vdash n$ is a polynomial of degree $k$ in $n$, whose coefficients in the binomial basis count standard Young tableaux of shape $\lambda$ with special restrictions. In this paper, we generalize their results on the representation's dimension to character values on arbitrary cycles.
\end{abstract}

\section*{Introduction}

Given a partition $\mu \vdash n$, let $f^{\mu}$ denote the dimension of the irreducible representation of $S_{n}$ that is associated with $\mu$. There are several ways to obtain this number, one of them is given by the hook formula. It is also equal to the number of standard Young tableaux of shape $\mu$.

Now fix a partition $\lambda \vdash k$, and consider the case where $\mu$ is the partition $(n-k,\lambda) \vdash n$, when $n$ is large. It is well-known (see, e.g., \cite{[Ra]}) that this expression is an integer-valued polynomial in $n$, of degree $k$. One may therefore ask about the coefficients showing up in its expansion in a suitable basis for the space of polynomials.

As this polynomial is integer-valued, the natural basis is the binomial one. In \cite{[CZ]}, Cohen and the second author found an expression for the coefficients in the resulting expansion, as numbers counting standard Young tableaux of shape $\lambda$ that satisfy some restriction. More precisely, Theorem 3.7 of that reference, which appears as Theorem \ref{dimform} here, states that if we write $f^{(n-k,\lambda)}$, as a polynomial in $n$, as the sum $\sum_{h=0}^{k}(-1)^{h}a_{\lambda,h}\binom{n}{k-h}$, then $a_{\lambda,h}$ counts the number of standard Young tableaux of shape $\lambda$ in which the first $h$ numbers lie in the first column. In particular $a_{\lambda,h}=0$ if $h$ is larger than the length $\ell(\lambda)$ of $\lambda$, so that the sum can be taken up to $\ell(\lambda)$ only (see Proposition 2.3 and Equation (2) there).

\smallskip

The dimension $f^{\mu}$ is the value of the character $\chi^{\mu}$ associated with representation $\mathcal{S}^{\mu}$ corresponding to $\mu$ at the trivial element of $S_{n}$. We thus ask whether such polynomial expressions can be obtained for other values of these characters, and whether in an appropriate basis the coefficients also have such combinatorial interpretations. For this we recall that for different values of $n$ these characters are defined on elements (or conjugacy classes) of different groups. However as $S_{n-1}$ is naturally viewed as the stabilizer, in the natural action $S_{n}$ on the set of integers between 1 and $n$, of the last number $n$, we can use these identifications of elements in different symmetric groups (like we did with the trivial elements), and consider the resulting behavior.

Now, the fact that these character values $\chi^{(n-k,\lambda)}(\sigma)$ are, for fixed $\sigma$ of that sort, also polynomials of degree $k$ is classically known, as explained in Proposition \ref{polgen} or Corollary \ref{specs} below. However, we are interested in the coefficients of their expansions in an appropriate basis, and see whether they have a combinatorial meaning in terms of counting tableaux of some sort. In this paper we achieve this goal for permutations $\sigma$ consisting of a single cycle (plus an increasing number of fixed points). It turns out that a basis in which this works is not the binomial one as in \cite{[CZ]}, but its translation by a fixed parameter that depends on $\sigma$. For a cycle $\sigma$ of length $r$, where the translation is by $r$, we determine the coefficients in the polynomial arising from $\lambda \vdash k$ as an alternating sum of numbers, each of which describes the number of skew-tableaux of a shape that is obtained by removing some fixed partition from $\lambda$.

\smallskip

We now describe the results in more detail. For any fixed $s\in\mathbb{Z}$, the set $\big\{\binom{x-s}{m}\big\}_{m\geq0}$ forms a basis for the space of polynomials, which is, in fact, an integral basis for the additive group of integer-valued polynomials (this is done, for $s=0$, in Section 1 of \cite{[CZ]}, and the general case is similar or follows easily, but this result was known way earlier, of course). For $\sigma$ a cycle of length $r$, we consider the expansion of the polynomial $\chi^{(n-k,\lambda)}(\sigma)$ as $\sum_{h=0}^{k}(-1)^{h}b_{\lambda,h}^{(r)}\binom{n-r}{k-h}$, with $s=r$ (using the same indexation and sign $(-1)^{h}$ as in \cite{[CZ]}), and prove that the coefficient $b_{\lambda,h}^{(r)}$ is a combination of numbers counting skew-tableaux, with alternating signs.

More precisely, we define, for any $r$, a set of \emph{$r$-primary} partitions, which is a disjoint union $\bigcup_{h=0}^{\infty}\Gamma^{r}_{h}$ where $\Gamma^{r}_{h}$ is the set of those partitions $\nu \vdash h$ (see Definition \ref{rprim} below). Any such partition comes equipped with an \emph{$r$-sign} $\varepsilon^{r}_{\nu}\in\{\pm1\}$, and if we write $f^{\lambda\setminus\nu}$ for the number of skew-tableaux of shape $\lambda\setminus\nu$ wherever $\nu\subseteq\lambda$ (and set it to be 0 otherwise---see Definition \ref{skewtabs} below), then our main result, which is Theorem \ref{main} below, reads as follows.
\begin{thmnn}
The $h$th coefficient $b_{\lambda,h}^{(r)}$ in the expansion of $\chi^{(n-k,\lambda)}(\sigma)$ equals $\sum_{\nu\in\Gamma^{r}_{h}}\varepsilon^{r}_{\nu}f^{\lambda\setminus\nu}$.
\end{thmnn}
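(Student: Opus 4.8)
The plan is to reduce the statement, via the Murnaghan--Nakayama rule together with the Cohen--Zemel dimension formula (Theorem~\ref{dimform}), to a single identity in the ring of symmetric functions, and then to read $\Gamma^r_h$ and $\varepsilon^r_\nu$ off that identity. Let $\sigma\in S_n$ have cycle type $(r,1^{n-r})$. By the Murnaghan--Nakayama rule, $\chi^{(n-k,\lambda)}(\sigma)=\sum_{B}(-1)^{\mathrm{ht}(B)}f^{(n-k,\lambda)\setminus B}$, the sum being over border strips $B$ of size $r$ removable from $(n-k,\lambda)$: once $B$ is removed the remaining shape has $n-r$ cells and the remaining cycles of $\sigma$ are all fixed points, so the inner character value is a dimension. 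Working with $\beta$-numbers (the abacus) and using that $n$ is large, I would show that such $B$ come in exactly two families: the last $r$ cells of the long first row, whose removal gives $(n-k-r,\lambda)$ and has height $0$; and border strips lying entirely in rows $2,\dots,\ell(\lambda)+1$, which are in height-preserving bijection with the size-$r$ border strips removable from $\lambda$ itself, a strip $\lambda\setminus\lambda'$ producing the shape $(n-k,\lambda')$. This yields
\[
\chi^{(n-k,\lambda)}(\sigma)=f^{(n-k-r,\lambda)}+\sum_{\lambda'}(-1)^{\mathrm{ht}(\lambda\setminus\lambda')}f^{(n-k,\lambda')},
\]
the sum running over partitions $\lambda'$ with $\lambda\setminus\lambda'$ a border strip of size $r$.

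Next I would expand each term in the basis $\bigl\{\binom{n-r}{m}\bigr\}$. Recalling that the coefficient $a_{\mu,j}$ in Theorem~\ref{dimform} equals the number $f^{\mu\setminus(1^{j})}$ of standard tableaux of the skew shape $\mu\setminus(1^{j})$, Theorem~\ref{dimform} applied with $N=n-r$ gives $f^{(n-k-r,\lambda)}=\sum_h(-1)^h f^{\lambda\setminus(1^{h})}\binom{n-r}{k-h}$; and for each $\lambda'$ above (which has $k-r$ cells) writing $(n-k,\lambda')=\bigl((n-r)-(k-r),\lambda'\bigr)$ and reindexing gives $\sum_h(-1)^{h-r}f^{\lambda'\setminus(1^{h-r})}\binom{n-r}{k-h}$. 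Comparing with the expansion $\sum_h(-1)^h b^{(r)}_{\lambda,h}\binom{n-r}{k-h}$ yields
\[
b^{(r)}_{\lambda,h}=f^{\lambda\setminus(1^{h})}+(-1)^{r}\sum_{\lambda'}(-1)^{\mathrm{ht}(\lambda\setminus\lambda')}f^{\lambda'\setminus(1^{h-r})},
\]
which in particular reproves polynomiality in $n$ and independence of $b^{(r)}_{\lambda,h}$ from $n$. I then translate this into symmetric functions: using $f^{\mu\setminus\nu}=\langle s_\mu,\,s_\nu p_1^{|\mu\setminus\nu|}\rangle$, the identity $s_{(1^{j})}=e_j$, and the dual Murnaghan--Nakayama rule $p_r^{\perp}s_\lambda=\sum_{\lambda'}(-1)^{\mathrm{ht}(\lambda\setminus\lambda')}s_{\lambda'}$ (adjoint to multiplication by $p_r$, same index set as above), the last display becomes $b^{(r)}_{\lambda,h}=\langle s_\lambda,\,(e_h+(-1)^{r}p_r e_{h-r})\,p_1^{k-h}\rangle$.

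It remains to expand $g_{r,h}:=e_h+(-1)^{r}p_r e_{h-r}$ in the Schur basis. For $h<r$ we have $g_{r,h}=e_h=s_{(1^{h})}$. For $h\ge r$, the multiplicative Murnaghan--Nakayama rule gives $p_r e_{h-r}=p_r s_{(1^{h-r})}=\sum_{\tau}(-1)^{\mathrm{ht}(\tau\setminus(1^{h-r}))}s_\tau$ over partitions $\tau\vdash h$ containing $(1^{h-r})$ with $\tau\setminus(1^{h-r})$ a border strip of size $r$; the summand $\tau=(1^{h})$ has height $r-1$, so its $(-1)^r$-multiple equals $-s_{(1^{h})}$ and cancels the $e_h$, leaving
\[
g_{r,h}=\sum_{\tau\neq(1^{h})}(-1)^{\,r+\mathrm{ht}(\tau\setminus(1^{h-r}))}\,s_\tau .
\]
By the design of Definition~\ref{rprim}, $\Gamma^r_h$ is exactly the set of partitions occurring in this expansion (for $h<r$, the single partition $(1^{h})$) and $\varepsilon^r_\nu$ is the corresponding sign, so $g_{r,h}=\sum_{\nu\in\Gamma^r_h}\varepsilon^r_\nu s_\nu$. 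Substituting,
\[
b^{(r)}_{\lambda,h}=\Bigl\langle s_\lambda,\ \Bigl(\sum_{\nu\in\Gamma^r_h}\varepsilon^r_\nu s_\nu\Bigr)p_1^{k-h}\Bigr\rangle=\sum_{\nu\in\Gamma^r_h}\varepsilon^r_\nu\,\langle s_\lambda,\,s_\nu p_1^{k-h}\rangle=\sum_{\nu\in\Gamma^r_h}\varepsilon^r_\nu\,f^{\lambda\setminus\nu},
\]
with $f^{\lambda\setminus\nu}=0$ when $\nu\not\subseteq\lambda$, which is the assertion.

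I expect the genuine work to lie in two places. The first is the border-strip classification in the opening paragraph: justifying it uniformly in $r$ (in particular when $r$ is large relative to $k$, so that the second family is empty), and checking carefully, via the $\beta$-numbers, that heights are preserved under the bijection with the strips of $\lambda$. The second, and in my view the main point, is verifying that the Schur expansion of $e_h+(-1)^{r}p_r e_{h-r}$ reproduces exactly the notion of $r$-primary partition and the $r$-sign as formulated in Definition~\ref{rprim}. The remaining ingredients --- the Murnaghan--Nakayama rule in both its character and multiplicative forms, the identity $f^{\mu\setminus\nu}=\langle s_\mu,s_\nu p_1^{|\mu\setminus\nu|}\rangle$, and Theorem~\ref{dimform} --- are standard and used as black boxes.
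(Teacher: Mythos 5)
Your proposal is correct, and although it shares its combinatorial core with the paper, the mechanism by which you pass from the base data to all coefficients is genuinely different. The paper starts exactly as you do: the Murnaghan--Nakayama rule applied to $(n-k,\lambda)$ gives $\chi^{(n-k,\lambda)}(\sigma)=f^{(n-k-r,\lambda)}+\sum_{\rho}(-1)^{\mathrm{ll}(\rho)}f^{((n-r)-(k-r),\lambda\setminus\rho)}$, and expanding via Theorem \ref{dimform} yields precisely your identity $b^{(r)}_{\lambda,h}=a_{\lambda,h}+\sum_{\rho}(-1)^{r-\mathrm{ll}(\rho)}a_{\lambda\setminus\rho,h-r}$ (it appears verbatim inside the proof of Lemma \ref{genbasis}). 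But the paper only exploits this at $h=k$, where it classifies which $\lambda$ arise from $(1^{k-r})$ by adding an $r$-hook --- exactly the verification you defer to the end --- to obtain the $r$-primary partitions and their signs, and then handles all $h<k$ by induction on $k$ using the two branching recursions (Corollary \ref{coeffind} for the coefficients, Lemma \ref{recskew} for the skew-tableau counts). You instead keep the formula for every $h$, transport it into the ring of symmetric functions via $f^{\mu\setminus\nu}=\langle s_\mu,s_\nu p_1^{|\mu|-|\nu|}\rangle$ and the adjointness of $p_r$ and $p_r^{\perp}$, and reduce everything to the single Schur expansion of $e_h+(-1)^rp_re_{h-r}$; your cancellation of the $\tau=(1^h)$ term against $e_h$ is the symmetric-function avatar of the paper's observation that for $\lambda=(1^k)$ with $k\ge r$ the hook occupying the last $r$ rows cancels the first summand. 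Your route buys the elimination of the induction and a uniform treatment of all $h$ at once; the paper's route stays at the level of tableau counting with no symmetric-function machinery, and its inductive frame is the part the authors expect to reuse for non-cycle permutations, where only the base cases change. The one step you leave unverified --- that the Schur support and signs of $e_h+(-1)^rp_re_{h-r}$ reproduce Definition \ref{rprim} exactly --- is genuine work, but it is literally the two-case picture analysis in the paper's proof of Lemma \ref{genbasis} (border strips through, or not through, the first column of $\lambda$), so nothing is missing in principle.
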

For example, in the case $r=2$ of transpositions, the 2-primary partitions are $(h)$ for $0 \leq h\leq3$ as well as $(3,1^{h-3})$ and $(2,2,1^{h-4})$ when $h\geq4$, with the latter carrying a negative sign while the signs of all the other ones are positive. Hence it we set $b_{\lambda,h}^{+}$ to be $f^{\lambda\setminus(h)}$ when $0 \leq h\leq3$ and $f^{\lambda\setminus(3,1^{h-3})}$ in case $h\geq4$, and $b_{\lambda,h}^{-}$ is $f^{\lambda\setminus(2,2,1^{h-4})}$ for $h\geq4$ and vanishes in case $0 \leq h\leq3$, then the result, which reduces to Theorem \ref{transthm} below, is that $b_{\lambda,h}^{(2)}=b_{\lambda,h}^{+}-b_{\lambda,h}^{-}$.

\smallskip

We indicate briefly how the proof works. In a manner similarly to \cite{[CZ]}, one easily verifies that both the coefficients $b_{\lambda,h}^{(r)}$ and the linear combinations of the $f^{\lambda\setminus\nu}$'s satisfy, when $h<k$, a close analogue of the Branching Rule (this is the case for permutation $\sigma$, not just a cycle---see Corollary \ref{coeffind} and Lemma \ref{recskew} below). Therefore the main issue is to establish, for $\lambda \vdash k$, the equality when $h=k$. For transpositions we can do so using an old formula of Frobenius from \cite{[F]}, which we cite as Proposition \ref{Frobenius} below. In general there are two ways to obtain these constant terms, either by applying the Murnaghan--Nakayama Rule, or using a formula that is based on cycle sizes as variables of a polynomial---see Proposition \ref{recpart} below. We present both constructions in this paper, and we also deduce an alternative expansion for the dimension using the value $r=1$ (see Remark \ref{r1alt} below), as well as show, in Corollary \ref{limr} that the coefficients ``stabilize as $r\to\infty$'' at the value $a_{\lambda,h}$ from \cite{[CZ]}.

We note that for a general permutation $\sigma$ both the proof and the form of the result is more complicated, as with the natural value of $s$ from Corollary \ref{specs}, the signs $\varepsilon^{r}_{\nu}$ arising from the basic cases in this inductive construction no longer have to be $\pm1$---even in the simplest non-cycle case where $\sigma$ is the product of two disjoint transpositions values with $\pm2$ already show up. This is why we focus only on cycles in this paper.

\smallskip

This paper is divided into 4 sections. Section \ref{ClassRes} cites some classical results about dimensions and characters and establishes some basic inductive properties. Section \ref{Trans} presents the proof of the main result for transpositions, using an old formula of Frobenius. In Section \ref{GenCyc} we give the proof for the general case, and Section \ref{AltPf} contains the second way to determine the constant coefficients that are required as the basic step for our construction.

We are thankful to Y. Roichman and R. Adin for numerous discussions and useful insights around these results.

\section{Some Classical Results and their Consequences \label{ClassRes}}

Recall that a \emph{partition} of $n$, denoted as $\mu \vdash n$, is a weakly decreasing sequence $\{\mu_{i}\}_{i=1}^{\ell}$ of positive integers such that $\sum_{i=1}^{\ell}\mu_{i}=n$. The number $\ell$ is the \emph{length} of the partition $\mu$, and is denoted by $\ell(\mu)$.

We identify, throughout the paper, a partition $\mu \vdash n$ with its Young diagram. Namely, if $\mu=\{\mu_{i}\}_{i=1}^{\ell}$ then the diagram consists of $\ell(\mu)$ left-aligned lines of boxes, with the $i$th line containing $\mu_{i}$ boxes. We will use the English notation, in which the indices of the rows increase as we go down the diagram. If a box $v$ with entries $(i,j)$ satisfies $1 \leq j\leq\mu_{i}$ then the box is contained in the diagram of $\mu$, and we simply write $v\in\mu$ and say that $v$ is contained in $\mu$.

\smallskip

If $v$ is a box that is contained in $\mu$, then the \emph{hook} $H_{v}$ that associated with $v$ is the set of boxes in $\mu$ that either lie below $v$ in the same column, or to the right of $v$ in the same row (including $v$ itself), and $h_{v}$ is the size $|H_{v}|$ of $H_{v}$. Then the \emph{Hook Formula}, given in, e.g., Theorem 3.10.2 of \cite{[S]}, reads as follows.
\begin{thm}
The dimension $f^{\mu}$ of the irreducible representation of $S_{n}$ corresponding to the partition $\mu \vdash n$ is given by
$n!\big/\prod_{v\in\mu}h_{v}$. \label{Hook}
\end{thm}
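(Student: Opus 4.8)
The plan is to argue by induction on $n=|\mu|$, using the classical Branching Rule to reduce the statement to a purely combinatorial identity about hook lengths. The base case $n=0$ is trivial, since $f^{\varnothing}=1$ and the empty product equals $1$. For the inductive step, recall that the Branching Rule expresses $f^{\mu}$ as $\sum_{c}f^{\mu\setminus c}$, the sum running over the removable corners $c$ of $\mu$ (those boxes whose deletion leaves a Young diagram). Write $F^{\mu}:=n!\big/\prod_{v\in\mu}h_{v}$ for the value claimed by the theorem; by the inductive hypothesis $F^{\mu\setminus c}=f^{\mu\setminus c}$ for every $c$, so it is enough to show $F^{\mu}=\sum_{c}F^{\mu\setminus c}$, which after clearing denominators becomes
\[
n=\sum_{c}\frac{\prod_{v\in\mu}h_{v}}{\prod_{v\in\mu\setminus c}h_{v}},
\]
the hooks in the denominator being computed inside the smaller diagram $\mu\setminus c$.

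The next step is to make the right-hand side explicit. Fix a removable corner $c=(\alpha,\beta)$. Deleting $c$ changes the hook of a box only if that box lies in row $\alpha$ or in column $\beta$: for $1\leq j<\beta$ the box $(\alpha,j)$ loses $c$ from the arm of its hook, for $1\leq i<\alpha$ the box $(i,\beta)$ loses $c$ from the leg of its hook, so in each case the hook length drops by exactly $1$, while $c$ itself has hook length $1$ and simply disappears. Hence the $c$-summand above equals $\prod_{i=1}^{\alpha-1}\frac{h_{(i,\beta)}}{h_{(i,\beta)}-1}\cdot\prod_{j=1}^{\beta-1}\frac{h_{(\alpha,j)}}{h_{(\alpha,j)}-1}$, with all hooks now taken in $\mu$, and the identity to be proved reads
\[
n=\sum_{c=(\alpha,\beta)}\ \prod_{i=1}^{\alpha-1}\frac{h_{(i,\beta)}}{h_{(i,\beta)}-1}\ \prod_{j=1}^{\beta-1}\frac{h_{(\alpha,j)}}{h_{(\alpha,j)}-1}.
\]
I would establish this via the hook-walk argument of Greene, Nijenhuis and Wilf, which is the approach of \cite{[S]}: choose a box of $\mu$ uniformly at random, then repeatedly move from the current box to a uniformly random \emph{other} box of its hook, halting when a corner is reached (a corner's hook is a single box, so the walk always terminates). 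The heart of the argument is the assertion that the probability this walk ends at a given corner $c=(\alpha,\beta)$ is exactly $\tfrac1n\prod_{i=1}^{\alpha-1}\tfrac{h_{(i,\beta)}}{h_{(i,\beta)}-1}\prod_{j=1}^{\beta-1}\tfrac{h_{(\alpha,j)}}{h_{(\alpha,j)}-1}$; since these probabilities sum to $1$ over all corners, the displayed identity follows and the induction closes. I expect this probability computation to be the main obstacle: it is carried out by conditioning on the box visited after the first step and inducting on the number of remaining steps, and the bookkeeping needed to recognise the resulting sum as the claimed product is exactly what makes the Greene--Nijenhuis--Wilf argument delicate.

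As an alternative that avoids probability, one can instead invoke the Frobenius formula $f^{\mu}=n!\,\prod_{i<j}(\ell_{i}-\ell_{j})\big/\prod_{i}\ell_{i}!$, where $\ell=\ell(\mu)$ and $\ell_{i}=\mu_{i}+\ell-i$ for $1\leq i\leq\ell$ (so that $\ell_{i}$ is the hook length of the $i$th box in the first column). The key combinatorial observation is then that the multiset of hook lengths occurring in the $i$th row of $\mu$ is precisely $\{1,2,\dots,\ell_{i}\}$ with the $\ell-i$ numbers $\ell_{i}-\ell_{i+1},\dots,\ell_{i}-\ell_{\ell}$ removed; consequently $\prod_{v\in\text{row }i}h_{v}=\ell_{i}!\big/\prod_{j>i}(\ell_{i}-\ell_{j})$, and taking the product over all rows gives $\prod_{v\in\mu}h_{v}=\prod_{i}\ell_{i}!\big/\prod_{i<j}(\ell_{i}-\ell_{j})$, which is exactly $n!/f^{\mu}$. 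Either way, the representation theory enters only through a single standard input, and the remaining content of the proof is combinatorial.
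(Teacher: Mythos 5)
The paper does not prove this statement: Theorem \ref{Hook} is quoted from Theorem 3.10.2 of \cite{[S]} and used as a black box, so there is no internal argument to compare yours against. Your first outline is in fact the standard one found at that citation (the probabilistic proof of Greene, Nijenhuis and Wilf), and the reduction you carry out is sound: the base case, the appeal to the Branching Rule at the trivial element (Theorem \ref{Branching} here), and the computation of how hook lengths change when a removable corner $(\alpha,\beta)$ is deleted (only boxes in row $\alpha$ or column $\beta$ are affected, each hook dropping by exactly $1$) correctly reduce the theorem to the identity
\[
n=\sum_{c=(\alpha,\beta)}\ \prod_{i=1}^{\alpha-1}\frac{h_{(i,\beta)}}{h_{(i,\beta)}-1}\ \prod_{j=1}^{\beta-1}\frac{h_{(\alpha,j)}}{h_{(\alpha,j)}-1}.
\]

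The genuine gap is that this identity \emph{is} the theorem, and you do not prove it: you describe the hook walk and assert that it terminates at $c$ with probability $\tfrac1n$ times the displayed product, explicitly deferring the ``delicate bookkeeping''. That computation (conditioning on the sets of rows and columns the walk visits and recognising the resulting sum as the product) is where all the work lies; without it the induction does not close. Your alternative route has the same character: it outsources the difficulty to the Frobenius--Young formula $f^{\mu}=n!\,\prod_{i<j}(\ell_{i}-\ell_{j})\big/\prod_{i}\ell_{i}!$, a theorem of comparable depth that you take as given, and to the claim that the hook lengths in row $i$ are $\{1,\dots,\ell_{i}\}$ with the values $\ell_{i}-\ell_{j}$, $j>i$, deleted, which is also only asserted (this one does admit a short direct verification). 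So the architecture is right and standard, but as written neither route contains a proof of its key lemma; if you intend the theorem as a citation, as the paper does, say so, and otherwise one of these two lemmas must actually be established.
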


\smallskip

Recall that a box $v\in\mu$ is an \emph{internal corner} of $\mu$ if $h_{v}=1$. These are precisely the boxes that can be removed from $\mu$ and leave a complement that is itself the Young diagram of a partition, which we denote by $\mu-v$. We denote by $\operatorname{IC}(\mu)$ the set of internal corners of $\mu$, and when we identify $S_{n-1}$ as the stabilizer of $n$ in the natural action of $S_{n}$, the \emph{Branching Rule}, which appears as, e.g., Lemma 2.8.3 of \cite{[S]}, is the following result.
\begin{thm}
Let $\mathcal{S}^{\mu}$ denote the representation of $S_{n}$ that is associated with the partition $\mu \vdash n$, and denote its character by $\chi^{(\mu)}$. Then the restriction of $\mathcal{S}^{\mu}$ to $S_{n-1}$ decomposes as the direct sum $\bigoplus_{v\in\operatorname{IC}(\mu)}S^{\mu-v}$, and therefore for any element $\pi \in S_{n-1}$, viewed also as embedded inside $S_{n}$, we have the equality $\chi^{\mu}(\pi)=\sum_{v\in\operatorname{IC}(\mu)}\chi^{\mu-v}(\pi)$. \label{Branching}
\end{thm}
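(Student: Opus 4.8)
The plan is to establish the module statement $\operatorname{Res}^{S_{n}}_{S_{n-1}}\mathcal{S}^{\mu}\cong\bigoplus_{v\in\operatorname{IC}(\mu)}S^{\mu-v}$ first, since the character identity then follows immediately: the character of a direct sum is the sum of the characters, and an element $\pi\in S_{n-1}\subseteq S_{n}$ acts on the restricted module exactly as it does on $\mathcal{S}^{\mu}$. So the task reduces to decomposing $\operatorname{Res}^{S_{n}}_{S_{n-1}}\mathcal{S}^{\mu}$.

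I would do this with a filtration of the Specht module read off the polytabloid basis. Recall $\mathcal{S}^{\mu}=S^{\mu}$ has basis $\{e_{t}\}$ with $t$ ranging over standard tableaux of shape $\mu$. List the removable corners $v_{1},\dots,v_{m}$ of $\mu$ in a suitable order (say by row, from the top down), so that $v_{i}$ occupies row $r_{i}$, and for $0\le i\le m$ let $V_{i}\subseteq S^{\mu}$ be the span of those standard $e_{t}$ in which the entry $n$ lies in one of the rows $r_{1},\dots,r_{i}$. Two things must be checked: (i) each $V_{i}$ is an $S_{n-1}$-submodule; and (ii) the assignment sending $e_{t}$ to $e_{\bar t}$ --- where $\bar t$ is $t$ with the box holding $n$ deleted, when $n$ sits in row $r_{i}$, and to $0$ when $n$ sits in a higher row --- descends to an $S_{n-1}$-equivariant isomorphism $V_{i}/V_{i-1}\xrightarrow{\ \sim\ }S^{\mu-v_{i}}$. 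Granting these, the character identity drops out, and by semisimplicity --- since the $\mu-v$ are pairwise distinct partitions of $n-1$ and the $S^{\mu-v}$ are irreducible --- the direct-sum decomposition follows as well. For (ii) one really only needs equivariance and surjectivity together with $\ker=V_{i-1}$; the necessary dimension count is supplied by the bijection ``delete the entry $n$ from its corner,'' which gives $\sum_{v\in\operatorname{IC}(\mu)}f^{\mu-v}=f^{\mu}$, a fact also visible from the Hook Formula (Theorem \ref{Hook}).

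The genuine obstacle is point (i), and with it the well-definedness half of (ii): one must verify that acting by elements of $S_{n-1}$ and reducing polytabloids through the Garnir relations never pushes the entry $n$ into a row above those allowed in $V_{i}$. This is the combinatorial core of the argument and is where care is needed; everything else is bookkeeping.

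Two alternatives sidestep this. Via Frobenius characteristics: $\mathrm{ch}(S^{\mu})=s_{\mu}$, and restriction from $S_{n}$ to $S_{n-1}$ corresponds under $\mathrm{ch}$ to the skewing operator $s_{1}^{\perp}$ adjoint to multiplication by $p_{1}=s_{1}$; since $\langle s_{1}^{\perp}s_{\mu},s_{\nu}\rangle=\langle s_{\mu},s_{1}s_{\nu}\rangle$ is, by the Pieri rule, equal to $1$ exactly when $\nu=\mu-v$ for some $v\in\operatorname{IC}(\mu)$ and $0$ otherwise, we get $s_{1}^{\perp}s_{\mu}=\sum_{v\in\operatorname{IC}(\mu)}s_{\mu-v}$, which is equivalent to the decomposition. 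And for the character identity alone one can argue even more directly: if $\pi\in S_{n-1}$ has cycle type $\rho$, then inside $S_{n}$ it has cycle type $(\rho,1)$, and stripping the unique length-one border strip (with sign $+1$) in the Murnaghan--Nakayama rule yields $\chi^{\mu}((\rho,1))=\sum_{v\in\operatorname{IC}(\mu)}\chi^{\mu-v}(\rho)$ on the nose. I would present the polytabloid filtration as the main proof, since it also delivers the module decomposition, and record these two as remarks.
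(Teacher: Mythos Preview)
The paper does not supply its own proof of this statement: it is quoted as the classical Branching Rule, with a reference to Lemma~2.8.3 of Sagan~\cite{[S]}. Your principal approach---a filtration of $S^{\mu}$ by $S_{n-1}$-submodules indexed by the possible location of the entry $n$, with successive quotients isomorphic to the $S^{\mu-v}$---is exactly Sagan's argument, so in that sense you have reproduced the cited proof. Your two alternatives (skewing by $s_{1}$ on the symmetric-function side, and the $r=1$ case of Murnaghan--Nakayama) are also standard and correct; the paper itself remarks, just after Theorem~\ref{MNrule}, that the $r=1$ case recovers Theorem~\ref{Branching}.

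One small slip to fix: with the corners listed ``from the top down'' (so $r_{1}<r_{2}<\cdots$), the span $V_{i}$ of those $e_{t}$ with $n$ in one of the rows $r_{1},\dots,r_{i}$ is \emph{not} an $S_{n-1}$-submodule. Garnir straightening can only move the entry $n$ downward (to a row of larger index), never upward; hence the correct filtration takes $V_{i}$ to be the span of those $e_{t}$ with $n$ in one of the \emph{bottom} $i$ removable corners (equivalently, order the corners from the bottom up, or set $V_{i}=\operatorname{span}\{e_{t}:n\text{ lies in row }r_{j}\text{ for some }j\ge m+1-i\}$). With that reversal your points (i) and (ii) go through exactly as you outlined, and the phrase ``never pushes $n$ into a row above those allowed'' becomes the correct assertion.
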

In particular, taking $\sigma$ to be the trivial element in Theorem \ref{Branching} yields $f^{\mu}=\sum_{v\in\operatorname{IC}(\mu)}f^{\mu-v}$.

\smallskip

In this paper, as in \cite{[CZ]}, we are interested in partitions of the form $\mu=(n-k,\lambda) \vdash n$ and the behavior as a function of $n$, where $\lambda \vdash k$ is a fixed partition. For the dimension, namely the value of the character at the trivial element, this behavior is given in Theorem 3.7 in \cite{[CZ]}, as described in the Introduction.
\begin{thm}
Let $\lambda \vdash k$ be a partition, and take any $n \geq k+\lambda_{1}$. Then $f^{(n-k,\lambda)}$ is a polynomial of degree $k$ as a function of $n$, and its expansion in the binomial basis for polynomials is \[f^{(n-k,\lambda)}=\sum_{h=0}^{\ell(\lambda)}(-1)^{h}a_{\lambda,h}\binom{n}{k-h},\] where $a_{\lambda,h}$ is the number of standard Young tableaux of shape $\lambda$ in which the numbers between 1 and $h$ are posited at the beginning of the first $h$ rows. \label{dimform}
\end{thm}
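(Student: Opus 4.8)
The plan is to prove the statement by induction on $k = |\lambda|$, using the Branching Rule (Theorem \ref{Branching}) in the form $f^{\mu} = \sum_{v \in \operatorname{IC}(\mu)} f^{\mu-v}$ applied to $\mu = (n-k,\lambda)$, and matching it against the corresponding recursion for the claimed combinatorial quantities $a_{\lambda,h}$. First I would check the base case $k=0$, where $\lambda$ is empty, $(n) \vdash n$ is the trivial representation, $f^{(n)} = 1 = \binom{n}{0}$, and indeed $a_{\emptyset,0}=1$ (the empty tableau) with all higher coefficients zero. For the inductive step, observe that the internal corners of $(n-k,\lambda)$ are of two kinds: the internal corners $v$ of $\lambda$ itself (giving $(n-k,\lambda-v)$, a partition of $n-1$ of the same shape type with a smaller $\lambda$), and — provided $n-k > \lambda_1$, which holds under the hypothesis $n \geq k + \lambda_1$ with the mild genericity one gets from working with $n$ large and then invoking polynomial identity — the box at the end of the first row, giving $(n-k-1,\lambda) = ((n-1)-k,\lambda)$.

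Thus the Branching Rule yields
\[
f^{(n-k,\lambda)} = f^{((n-1)-k,\lambda)} + \sum_{v \in \operatorname{IC}(\lambda)} f^{((n-1)-(k-1),\lambda-v)}.
\]
Now I would feed in the inductive hypothesis: each $f^{((n-1)-(k-1),\lambda-v)}$ expands as $\sum_{h}(-1)^h a_{\lambda-v,h}\binom{n-1}{k-1-h}$, and $f^{((n-1)-k,\lambda)}$ expands as $\sum_h (-1)^h a_{\lambda,h}\binom{n-1}{k-h}$. Using the Pascal identity $\binom{n}{k-h} = \binom{n-1}{k-h} + \binom{n-1}{k-1-h}$, I want to reorganize the right-hand side into the form $\sum_h (-1)^h a_{\lambda,h}\binom{n}{k-h}$; comparing the coefficient of $\binom{n-1}{k-1-h}$ on both sides forces the identity
\[
a_{\lambda,h} \;=\; a_{\lambda,h-1} \;+\; \sum_{v \in \operatorname{IC}(\lambda)} a_{\lambda-v,h-1},
\]
with the convention $a_{\lambda,-1}=0$. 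So it remains to verify that the combinatorial numbers — the count of standard Young tableaux of shape $\lambda$ with $1,\dots,h$ placed at the starts of the first $h$ rows — satisfy exactly this recursion, together with the boundary value $a_{\lambda,0} = f^{\lambda}$ (no restriction) and $a_{\lambda,h}=0$ for $h > \ell(\lambda)$. The recursion for the $a_{\lambda,h}$ I would prove by a direct bijective argument: in a tableau counted by $a_{\lambda,h}$, look at the cell containing the entry $k = |\lambda|$; it must sit at an internal corner $v$ of $\lambda$. If $v$ is \emph{not} the forced cell in row $h$ (i.e. removing it does not disturb the "first $h$ rows constrained" condition), deleting $k$ gives a tableau of shape $\lambda - v$ still counted by $a_{\lambda-v,h}$... but wait — the index shift in the recursion is $h \mapsto h-1$, so the cleaner bookkeeping is to instead track the entry $k$ versus whether row $h$ of $\lambda$ has length exactly $1$; I would split on the two cases $\lambda_h = 1$ (the forced entry in row $h$ is the entry $k$ only if that row is a single box — handle by peeling the bottom constrained row) versus $\lambda_h \geq 2$, and in each case produce the bijection realizing the two summands $a_{\lambda,h-1}$ and $\sum_v a_{\lambda-v,h-1}$. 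The vanishing $a_{\lambda,h}=0$ for $h>\ell(\lambda)$ is immediate since one cannot place constrained entries at the start of a nonexistent row.

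The main obstacle I anticipate is the boundary/genericity issue at the top of the first row: the decomposition of $\operatorname{IC}((n-k,\lambda))$ into "$\lambda$-corners" plus "the extra first-row box" is only valid when $n-k > \lambda_1$ (otherwise the first two rows merge and the extra corner disappears), so the Branching recursion as written holds only for $n$ strictly larger than the threshold. One clean way around this — the one I would adopt — is to establish the identity of polynomials first for all sufficiently large $n$, conclude it as an identity in $\mathbb{Q}[n]$, and then note that both sides, being polynomials agreeing on infinitely many integers, agree everywhere, in particular on the stated range $n \geq k+\lambda_1$; the degree statement (degree exactly $k$) then follows from tracking that $a_{\lambda,0}=f^{\lambda}\neq 0$ is the coefficient of $\binom{n}{k}$. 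The only other point requiring a little care is getting the sign $(-1)^h$ and the index conventions to line up consistently through the Pascal rearrangement, but that is routine bookkeeping once the recursion above is pinned down.
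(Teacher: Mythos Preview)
This theorem is not proved in the present paper---it is quoted from \cite{[CZ]} (Theorem~3.7 there). The paper does, however, lay out the \cite{[CZ]} argument in its own language: the discrete-difference computation in Corollary~\ref{coeffind} (with $\sigma=\mathrm{id}$, $s=0$) plus the skew-tableau recursion in Lemma~\ref{recskew} (with $\nu=(1^{h})$, since $a_{\lambda,h}=f^{\lambda\setminus(1^{h})}$). Your overall plan---Branching Rule on $(n-k,\lambda)$, Pascal, and a matching recursion for the $a_{\lambda,h}$---is exactly that strategy.

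The execution, though, has a genuine error. The recursion you extract,
\[
a_{\lambda,h}=a_{\lambda,h-1}+\sum_{v\in\operatorname{IC}(\lambda)}a_{\lambda-v,h-1},
\]
is false: for $\lambda=(2)$ and $h=1$ it gives $a_{(2),1}=a_{(2),0}+a_{(1),0}=2$, whereas $a_{(2),1}=1$. The slip comes from feeding in the expansion of $f^{((n-1)-k,\lambda)}$, which is the statement for the \emph{same} $\lambda\vdash k$ and hence is not available by induction on $k$; if you instead take the honest discrete difference $f^{(n-k,\lambda)}-f^{((n-1)-k,\lambda)}=\sum_{v}f^{((n-1)-(k-1),\lambda-v)}$ and match coefficients (as in the proof of Corollary~\ref{coeffind}), the identity that actually drops out is
\[
a_{\lambda,h}=\sum_{v\in\operatorname{IC}(\lambda)}a_{\lambda-v,h}\qquad(0\le h<k),
\]
with \emph{no} shift in $h$. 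This is precisely Lemma~\ref{recskew} for $\nu=(1^{h})$, and its bijective proof is the clean one you were reaching for before the ``but wait'': the entry $k$ sits at some corner $v$, and since $h<k$ the constrained cells $(1,1),\ldots,(h,1)$ carry $1,\ldots,h$ and are untouched, so deleting $k$ yields a tableau counted by $a_{\lambda-v,h}$. No case split on $\lambda_{h}$ is needed. One further structural point: this recursion only pins down the coefficients with $h<k$, so the base case $k=0$ alone is not enough; one must separately determine the constant term $h=k$ for each $\lambda\vdash k$ (done in Lemma~2.6 of \cite{[CZ]}, the analogue of Lemma~\ref{genbasis} here).
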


As mentioned in the Introduction, Theorem \ref{dimform} is the case where inside the representation, we take the character value of the trivial element. A key consequence of the Branching Rule is that in the appropriate sense, we get the polynomial behavior for every element.
\begin{prop}
Take an element $\sigma$ in $S_{m}$ for some $m$, and identify it with its image in $S_{n}$ for any $n \geq m$. Then, for any $\lambda \vdash k$ and any $n\geq\max\{k+\lambda_{1},m\}$, the expression $\chi^{(n-k,\lambda)}(\sigma)$ is the value at $n$ of a polynomial of degree $k$ with positive leading coefficient. \label{polgen}
\end{prop}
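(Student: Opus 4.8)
The plan is to argue by induction on $k=|\lambda|$, peeling one box off the shape with the Branching Rule (Theorem \ref{Branching}) and invoking the elementary fact that a function $P$ on the integers whose first finite difference $P(n)-P(n-1)$ agrees, for $n$ large, with a polynomial of degree $d$ and positive leading coefficient, itself agrees for $n$ large with a polynomial of degree $d+1$ and positive leading coefficient: existence of such an antidifference follows by expanding the given difference in the basis $\big\{\binom{x}{j}\big\}$ and using $\binom{x}{j+1}-\binom{x-1}{j+1}=\binom{x-1}{j}$, and if the difference has leading term $cx^{d}$ with $c>0$ then, since $P(x)-P(x-1)\sim P'(x)$ to top order, $P$ has leading term $\tfrac{c}{d+1}x^{d+1}$.

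The base case $k=0$ is immediate: then $\lambda$ is empty, $(n-k,\lambda)=(n)$ affords the trivial representation, $\chi^{(n)}(\sigma)=1$ for all $n\geq m$, and $\max\{k+\lambda_{1},m\}=m$. For the inductive step fix $\lambda\vdash k$ with $k\geq1$ and put $g(n):=\chi^{(n-k,\lambda)}(\sigma)$. When $n>k+\lambda_{1}$ the box at the end of the first row of $(n-k,\lambda)$ is an internal corner, and $\operatorname{IC}\big((n-k,\lambda)\big)$ consists exactly of that box, whose removal yields $\big((n-1)-k,\lambda\big)$, together with the shifted internal corners $v$ of $\lambda$, removal of which yields $\big((n-1)-(k-1),\lambda-v\big)$ with $\lambda-v\vdash k-1$ and $(\lambda-v)_{1}\leq\lambda_{1}$. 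Applying Theorem \ref{Branching}, legitimate once $\sigma\in S_{n-1}$ (i.e.\ $n\geq m+1$), gives
\[g(n)-g(n-1)=\sum_{v\in\operatorname{IC}(\lambda)}\chi^{((n-1)-(k-1),\lambda-v)}(\sigma)\qquad\text{for all }n\geq N_{0}:=\max\{k+\lambda_{1}+1,\ m+1\}.\]
Since $n-1\geq\max\{k+\lambda_{1},m\}\geq\max\{(k-1)+(\lambda-v)_{1},m\}$ for such $n$, the inductive hypothesis applies to each $\lambda-v\vdash k-1$, so each summand on the right agrees on this range with a polynomial of degree $k-1$ and positive leading coefficient; as $\operatorname{IC}(\lambda)\neq\varnothing$ (the bottom-right box of $\lambda$ is always an internal corner) there is no top-degree cancellation, and the right-hand side agrees with a polynomial $q(n-1)$ of degree $k-1$ with positive leading coefficient.

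By the antidifference fact, choose a polynomial $P$ of degree $k$ with positive leading coefficient satisfying $P(x)-P(x-1)=q(x-1)$. Then $g(n)-P(n)$ has vanishing first difference for every $n\geq N_{0}$, hence is constant on $\{\,n\geq N_{0}-1\,\}=\{\,n\geq\max\{k+\lambda_{1},m\}\,\}$; adding that constant to $P$ produces a polynomial of degree $k$ with positive leading coefficient that equals $\chi^{(n-k,\lambda)}(\sigma)$ for all $n\geq\max\{k+\lambda_{1},m\}$, which closes the induction.

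The one place needing genuine care — and the only real obstacle — is the bookkeeping: identifying precisely which reduced shape each internal corner of $(n-k,\lambda)$ produces, noting that the first-row corner is present exactly when $n>k+\lambda_{1}$ (this is what forces the threshold $k+\lambda_{1}$ rather than $k+\lambda_{1}+1$), and carrying the lower bounds on $n$ through the induction so that the final range of validity comes out as exactly $\max\{k+\lambda_{1},m\}$. The finite-difference lemma and the absence of leading-coefficient cancellation are routine.
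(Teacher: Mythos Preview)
Your proof is correct and follows essentially the same route as the paper: induction on $k$ with the trivial base case, the Branching Rule to express $g(n)-g(n-1)$ as a sum over $v\in\operatorname{IC}(\lambda)$ of the character values for $\lambda-v$, positivity of the leading coefficients to preclude cancellation in that sum, and the finite-difference/antidifference argument to promote from degree $k-1$ to degree $k$. Your version is in fact a bit more careful than the paper's about the threshold bookkeeping (tracking exactly why the range of validity comes out to $n\geq\max\{k+\lambda_{1},m\}$), which the paper leaves implicit.
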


\begin{proof}
Write $g(n):=g^{\lambda}_{\sigma}(n)$ for the value $\chi^{(n-k,\lambda)}(\sigma)$, to emphasize that $n$ is the variable, and we argue by induction on $k$. If $k=0$ then $\lambda$ is empty, the representation associated with $(n-k,\lambda)=(n)$ is the trivial one, and the value of its character on $\sigma$ is the constant 1, hence a polynomial of degree $k=0$ having a positive leading coefficient, as desired.

We thus assume that $k>0$, and consider $n>\max\{k+\lambda_{1},m\}$. Then $\operatorname{IC}\big((n-k,\lambda)\big)$ consists of one box at the end of the first row (subtracting which yields $(n-1-k,\lambda) \vdash n-1$), and the shifting of the elements of $\operatorname{IC}(\lambda)$ one row below (and we identify these elements with $\operatorname{IC}(\lambda)$). Moreover, for every $v\in\operatorname{IC}(\lambda)$ we write the partition resulting from removing $v$ as $\big((n-1)-(k-1),\lambda-v\big)$, because $\lambda-v \vdash k-1$. Recalling that we work with the images of the same element $\sigma$, and subtracting the element associated with the inner corner in the first row, we obtain the equality
\begin{equation}
g^{\lambda}_{\sigma}(n)-g^{\lambda}_{\sigma}(n-1)=\sum_{v\in\operatorname{IC}(\lambda)}g^{\lambda-v}_{\sigma}(n-1). \label{indforpol}
\end{equation}

But as $\lambda-v \vdash k-1$ for every $v\in\operatorname{IC}(\lambda)$, the induction hypothesis implies that the right hand side of Equation (\ref{indforpol}) is obtained by substituting $n-1$ in a polynomial of degree $k-1$ having a positive leading coefficient, and is therefore the value at $n$ of a similar such polynomial. Therefore the sum over $v$ there is also a polynomial of the same sort (the positivity of the leading coefficient implies that there can be no cancelations, and the degree remains $k-1$).

It follows from that equation that $g^{\lambda}_{\sigma}$ is a function of $n$ whose discrete difference is a polynomial of degree $k-1$. But then there is a polynomial $h$ of degree $k$, with a positive leading coefficient, such that $h(x)-h(x-1)$ coincides with the polynomial arising from that equation. Since a polynomial is determined by this difference up to an additive constant, we deduce that $g^{\lambda}_{\sigma}$ can be written as this $h$ plus some constant, yielding the assertion. This proves the proposition.
\end{proof}

We saw in Theorem \ref{dimform} the expansion of the polynomial there in the binomial basis $\big\{\binom{x}{m}\big\}_{m\geq0}$ for the space of polynomials, which is a basis over $\mathbb{Z}$ for the integer-valued polynomials. This is natural, because dimensions, like all character values for the symmetric groups, are integers. More generally, we can fix some $s$, and then use $\big\{\binom{x-s}{m}\big\}_{m\geq0}$ as a basis for the polynomials (with the same integrality property). We then get the following consequence, generalizing Lemma 2.5 of \cite{[CZ]}.
\begin{cor}
Fix $s$ and $\sigma$, and write the polynomial $\chi^{(n-k,\lambda)}(\sigma)$ from Proposition \ref{polgen} as $\sum_{h=0}^{k}(-1)^{h}c_{\lambda,h}^{\sigma,s}\binom{n-s}{k-h}$. Then we have the equality
$c_{\lambda,h}^{\sigma,s}=\sum_{v\in\operatorname{IC}(\lambda)}c_{\lambda-v,h}^{\sigma,s}$ for every $0 \leq h<k$. \label{coeffind}
\end{cor}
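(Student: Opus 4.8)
The plan is to run the recursion from Equation (\ref{indforpol}) through the change of basis $\big\{\binom{x}{m}\big\}\rightsquigarrow\big\{\binom{x-s}{m}\big\}$, exactly as in the proof of Proposition \ref{polgen} but now tracking coefficients. Write $g^{\lambda}_{\sigma}(n):=\chi^{(n-k,\lambda)}(\sigma)$ as before. By Proposition \ref{polgen} this is a polynomial of degree $k$, so it has an expansion $g^{\lambda}_{\sigma}(n)=\sum_{h=0}^{k}(-1)^{h}c_{\lambda,h}^{\sigma,s}\binom{n-s}{k-h}$, and likewise $g^{\lambda-v}_{\sigma}(n-1)=\sum_{h=0}^{k-1}(-1)^{h}c_{\lambda-v,h}^{\sigma,s}\binom{(n-1)-s}{(k-1)-h}$ for each $v\in\operatorname{IC}(\lambda)$, since $\lambda-v\vdash k-1$. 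All of these hold for every integer $n$ larger than some bound depending only on $k$, $\lambda_{1}$, and $m$ (note that $(\lambda-v)_{1}\le\lambda_{1}$, so the bound from Proposition \ref{polgen} for $\lambda-v$ at argument $n-1$ is implied by the one for $\lambda$ at argument $n$), hence as identities of polynomials.

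The key computation is the effect of the discrete difference on the shifted binomial basis. Using Pascal's identity $\binom{m}{j}-\binom{m-1}{j}=\binom{m-1}{j-1}$ with $m=n-s$ and $j=k-h$, one gets
\[
g^{\lambda}_{\sigma}(n)-g^{\lambda}_{\sigma}(n-1)=\sum_{h=0}^{k}(-1)^{h}c_{\lambda,h}^{\sigma,s}\left[\binom{n-s}{k-h}-\binom{(n-1)-s}{k-h}\right]=\sum_{h=0}^{k}(-1)^{h}c_{\lambda,h}^{\sigma,s}\binom{(n-1)-s}{(k-1)-h}.
\]
The $h=k$ term on the right contributes $\binom{(n-1)-s}{-1}=0$, so the sum effectively runs over $0\le h\le k-1$. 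Substituting this and the expansions of the $g^{\lambda-v}_{\sigma}(n-1)$ into Equation (\ref{indforpol}) yields
\[
\sum_{h=0}^{k-1}(-1)^{h}c_{\lambda,h}^{\sigma,s}\binom{(n-1)-s}{(k-1)-h}=\sum_{h=0}^{k-1}(-1)^{h}\left(\sum_{v\in\operatorname{IC}(\lambda)}c_{\lambda-v,h}^{\sigma,s}\right)\binom{(n-1)-s}{(k-1)-h},
\]
an identity of polynomials in $n$. Since $\big\{\binom{x-1-s}{m}\big\}_{m\ge0}$ is again a basis for the space of polynomials (a shift of the variable does not affect linear independence), we may compare coefficients of $\binom{(n-1)-s}{(k-1)-h}$ for each $h$ between $0$ and $k-1$, cancel the sign $(-1)^{h}$, and obtain $c_{\lambda,h}^{\sigma,s}=\sum_{v\in\operatorname{IC}(\lambda)}c_{\lambda-v,h}^{\sigma,s}$ for all $0\le h<k$, as claimed.

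I do not expect a serious obstacle here; this is the direct analogue of Lemma 2.5 of \cite{[CZ]}, the only new feature being the translation parameter $s$, which is inert throughout. The one point requiring minimal care is bookkeeping: making sure that a single range of $n$ validates all the expansions used simultaneously (so that the displayed equalities are genuine polynomial identities, not merely equalities at finitely many points), and noting that the case $k=0$ is vacuous since then there is no $h$ with $h<k$. Everything else is the routine Pascal-identity manipulation and a comparison of coefficients in a basis.
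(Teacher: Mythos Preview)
Your proposal is correct and follows essentially the same argument as the paper: both expand each side of Equation~(\ref{indforpol}) in the shifted binomial basis $\big\{\binom{n-1-s}{k-1-h}\big\}_{h=0}^{k-1}$, use Pascal's identity to collapse the discrete difference on the left (with the $h=k$ term vanishing), and then compare coefficients. The only cosmetic difference is that the paper phrases the vanishing at $h=k$ as $1-1=0$ rather than $\binom{(n-1)-s}{-1}=0$.
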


\begin{proof}
Substituting the expansion of each summand in the right hand side of Equation (\ref{indforpol}) and changing the summation order produces $\sum_{h=0}^{k-1}(-1)^{h}\big[\sum_{v\in\operatorname{IC}(\lambda)}c_{\lambda-v,h}^{\sigma,s}\big]\binom{n-1-s}{k-1-h}$.

In the left hand side we get $\sum_{h=0}^{k}(-1)^{h}c_{\lambda,h}^{\sigma,s}\big[\binom{n-s}{k-h}-\binom{n-1-s}{k-h}\big]$. For $h=k$ the parentheses yield $1-1=0$ (the cancelation of the constants), and otherwise $k-h>0$ and they reduce to $\binom{n-1-s}{k-1-h}$. This expresses both sides, as polynomials of degree $k-1$ in $n$, using the basis $\big\{\binom{n-1-s}{k-1-h}\big\}_{h=0}^{k-1}$. As the latter is a basis, the corresponding coefficients on both sides must coincide, yielding the desired equality. This proves the corollary.
\end{proof}
Note that Corollary \ref{coeffind} implicitly assumes that $\lambda$ is not empty, for otherwise the assertion is true in an empty manner (since no $0 \leq h<k=0$ exists). It follows from Corollary \ref{coeffind} that if we know, for such $\lambda$, the expansion of $g^{\lambda-v}_{\sigma}$ for every $v\in\operatorname{IC}(\lambda)$, then we also know all the coefficients in the one for $g^{\lambda}_{\sigma}$ except the constant one associated with $h=k$. The sign $(-1)^{h}$ is not important for Corollary \ref{coeffind} and its proof, but it turns out to give the more convenient interpretation of the coefficients, as exemplified already in Theorem \ref{dimform}.

\smallskip

Theorem \ref{dimform} compares the coefficients $a_{\lambda,h}$ with the numbers counting certain standard Young tableaux, where we recall that $\operatorname{SYT}(\lambda)$ stands for the set of standard Young tableaux of shape $\lambda$. We will be using the following generalization, which is well-known from the literature.
\begin{defn}
Let $\lambda \vdash k$ and $\nu \vdash h$ be partitions. Let $\lambda\setminus\nu$ be the \emph{skew-shape} that is obtained by removing the boxes of $\nu$ from $\lambda$ in case $\nu\subseteq\lambda$ as shapes. We write $f^{\lambda\setminus\nu}$ for the number of standard Young tableaux of skew-shape $\lambda\setminus\nu$, and set it to be 0 in case $\nu\not\subseteq\lambda$. \label{skewtabs}
\end{defn}
By choosing an element $M_{\nu}\in\operatorname{SYT}(\nu)$, and identifying standard Young tableau of skew-shape $\lambda\setminus\nu$ with the tableaux obtained by increasing every entry by $h$, the number $f^{\lambda\setminus\nu}$ from Definition \ref{skewtabs} is easily shown to coincide with the number of elements of $\operatorname{SYT}(\lambda)$ whose restriction to $\nu$ yields $M_{\nu}$. Hence for $\nu=(1^{h})$, with the only possible choice of $M_{\nu}$, the numbers $a_{\lambda,h}$ from Theorem \ref{dimform} are the same as $f^{\lambda\setminus(1^{h})}$. Another equivalent definition for $f^{\lambda\setminus\nu}$ is as the number of monotone paths, in the Young lattice, from $\nu$ to $\lambda$.

Lemma 3.6 of \cite{[CZ]} has the following natural generalization.
\begin{lem}
For any $\nu$ and $\lambda\neq\nu$, we have the equality $f^{\lambda\setminus\nu}=\sum_{v\in\operatorname{IC}(\lambda)}f^{(\lambda-v)\setminus\nu}$. \label{recskew}
\end{lem}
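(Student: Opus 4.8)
The plan is to prove the identity $f^{\lambda\setminus\nu}=\sum_{v\in\operatorname{IC}(\lambda)}f^{(\lambda-v)\setminus\nu}$ by a bijective argument, classifying standard Young tableaux of skew-shape $\lambda\setminus\nu$ according to where their largest entry sits. First I would dispose of the trivial case: if $\nu\not\subseteq\lambda$ then the left-hand side is $0$ by Definition \ref{skewtabs}, and since $\nu\not\subseteq\lambda-v$ for every internal corner $v$ (removing a box only shrinks $\lambda$), every summand on the right is $0$ as well, so the equality holds. Hence we may assume $\nu\subseteq\lambda$, and since $\lambda\neq\nu$ the skew-shape $\lambda\setminus\nu$ is nonempty, so it has at least one box and $|\lambda\setminus\nu|=k-h\geq1$.

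Now for the main case. Let $T$ be a standard Young tableau of skew-shape $\lambda\setminus\nu$, with entries $1,\dots,k-h$. The box containing the maximal entry $k-h$ must be a box of $\lambda\setminus\nu$ from which $T$ remains standard after deletion; equivalently it is a box $v$ that is an internal corner of $\lambda$ — it has nothing below it and nothing to its right inside $\lambda$ — and moreover $v\notin\nu$. Conversely, for any internal corner $v$ of $\lambda$ with $v\notin\nu$, the shape $(\lambda-v)\setminus\nu$ is a legitimate skew-shape (we still have $\nu\subseteq\lambda-v$ since $v\notin\nu$), and deleting the box $v$ with its entry $k-h$ from a tableau of shape $\lambda\setminus\nu$ whose maximum is at $v$ gives a standard tableau of shape $(\lambda-v)\setminus\nu$; this deletion/adjunction is clearly a bijection between $\{T\in\operatorname{SYT}(\lambda\setminus\nu): \max(T)\text{ at }v\}$ and $\operatorname{SYT}\big((\lambda-v)\setminus\nu\big)$. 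Partitioning $\operatorname{SYT}(\lambda\setminus\nu)$ over the possible locations of the maximum therefore gives
\[
f^{\lambda\setminus\nu}=\sum_{\substack{v\in\operatorname{IC}(\lambda)\\ v\notin\nu}}f^{(\lambda-v)\setminus\nu}.
\]
Finally I would observe that an internal corner $v$ of $\lambda$ that does lie in $\nu$ contributes a zero term anyway: in that case $v$, being the unique box in its row and column at the outer boundary of $\lambda$, forces $\nu\not\subseteq\lambda-v$, so $f^{(\lambda-v)\setminus\nu}=0$ by Definition \ref{skewtabs}. Thus the constrained sum equals the full sum $\sum_{v\in\operatorname{IC}(\lambda)}f^{(\lambda-v)\setminus\nu}$, completing the proof.

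The only mild subtlety — the step I would be most careful about — is the claim that the box holding the maximal entry of a standard skew tableau is always an internal corner of the ambient partition $\lambda$ (not merely an ``outer corner'' of the skew shape). This is because a box $v=(i,j)\in\lambda\setminus\nu$ that has a box of $\lambda$ below it or to its right would, by standardness, carry an entry smaller than that neighbor's, hence could not be the maximum; so $v$ must be maximal in both its row and column within $\lambda$, i.e. $v\in\operatorname{IC}(\lambda)$. Everything else is the routine ``remove the largest entry'' bijection, exactly parallel to the proof of the Branching Rule specialization $f^{\mu}=\sum_{v\in\operatorname{IC}(\mu)}f^{\mu-v}$ and to Lemma 3.6 of \cite{[CZ]}, of which this is the skew generalization.
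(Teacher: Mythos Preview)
Your proof is correct and is essentially the same argument as the paper's, phrased in the language of tableaux rather than monotone paths in the Young lattice: the paper classifies paths from $\nu$ to $\lambda$ by their penultimate vertex $\lambda-v$, while you classify elements of $\operatorname{SYT}(\lambda\setminus\nu)$ by the position of the maximal entry, and these two descriptions are in standard bijection. The paper in fact remarks, right after its proof, that one can equally argue via tableaux as in Lemma~3.6 of \cite{[CZ]}; your handling of the case $v\in\nu$ (forcing $\nu\not\subseteq\lambda-v$) and your justification that the maximal entry sits at an internal corner of $\lambda$ are both fine.
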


\begin{proof}
If $\lambda$ does not contain $\nu$ then neither does $\lambda-v$ for any $v\in\operatorname{IC}(\lambda)$, and both sides vanish (this is valid also when $\lambda=\emptyset$, since then $\nu$ cannot be empty, as the right hand side being an empty sum). We thus assume that $\nu\subsetneq\lambda$, and recall that $f^{\lambda\setminus\nu}$ also counts the monotone paths from $\nu$ to $\lambda$.

But as $\nu\subsetneq\lambda$, every such path contains at least one step, meaning that there is a last element of the Young lattice which is reached just before $\lambda$. As the element lying just below $\lambda$ in that lattice are precisely the partitions $\{\lambda-v\;|\;v\in\operatorname{IC}(\lambda)\}$, and every path goes through precisely one of these partitions, we can decompose the set of such paths according to the union over $v\in\operatorname{IC}(\lambda)$ of the set of paths from $\nu$ to $\lambda$ that go through $\lambda-v$.

So fix such $v$, and then a path going through $\lambda-v$ is a concatenation of a path from $\nu$ to $\lambda-v$ and the one-step path from the latter to $\lambda$. As the latter parts is uniquely determined, the number of such paths coincides with the number of paths from $\nu$ to $\lambda-v$, which is $f^{(\lambda-v)\setminus\nu}$ by definition. The fact that $f^{\lambda\setminus\nu}$ is the size of a disjoint union over $v\in\operatorname{IC}(\lambda)$ of sets, and the size of the set corresponding to $v$ is $f^{(\lambda-v)\setminus\nu}$, yields the desired equality. This proves the lemma.
\end{proof}
One can choose a tableau $M_{\nu}\in\operatorname{SYT}(\nu)$ and use it for proving Lemma \ref{recskew} in the terminology of Remark 3.5 and Lemma 3.6 of \cite{[CZ]}, but the current formulation is shorter. Note that as in Lemma 2.5 of that reference, the condition that $\lambda\neq\nu$ is important, since when $\lambda=\nu$ we have a contribution of 1 to $f^{\lambda\setminus\nu}$ from the trivial path (or from $M_{\nu}$), while $f^{(\lambda-v)\setminus\nu}=0$ for every $v\in\operatorname{IC}(\lambda)$ and the equality does not hold.

\smallskip

A tool that is used in one of the proofs of the general case is the Murnaghan--Nakayama Rule. This rule, which generalizes the Branching Rule, is a recursive formula which allows the computation of the characters of the symmetric groups. Before giving the rule, we first recall the relevant terminology (following \cite{[Ro]}). To state it we recall that the \emph{boundary} of a partition (or Young diagram) $\lambda$ is the set of cells $(i,j)\in\lambda$ for which $(i+1,j+1)\not\in\lambda$. Given such $\lambda$, a \emph{skew-hook} (in $\lambda$) is a connected subset $\rho$ of the boundary of $\lambda$ such that $\lambda\setminus\rho$ is the Young diagram of a partition. Each such non-empty $\rho$ consists of boxes lying in several consecutive rows (with one or more boxes in each such row), and we denote by $\mathrm{ll}(\rho)$ the \emph{leg length} of $\rho$, which is one less than the number of these rows.

The Murnaghan--Nakayama Rule reads as follows (see, e.g., Theorem 4.10.2 of \cite{[S]}).
\begin{thm}
Let $\mu \vdash n$ be a partition, then for $r\geq1$ we denote by $\operatorname{SH}_{r}^{\mu}$ the set of skew-hooks of $\mu$ which consist of $r$ cells. Consider an element $\pi \in S_{n-r}$, and let $\eta \in S_{n}$ be the product of $\pi$ with a cycle $\sigma$ of length $r$ that is supported on the numbers between $n-r+1$ and $n$. Then we have the equality \[\chi^{\mu}(\eta)=\sum_{\rho\in\operatorname{SH}_{r}^{\mu}}(-1)^{\mathrm{ll}(\rho)}\chi^{\mu\setminus\rho}(\pi).\] In particular, when $\pi$ is trivial and $\eta=\sigma$ this expresses $\chi^{\mu}(\sigma)$ as $\sum_{\rho\in\operatorname{SH}_{r}^{\mu}}(-1)^{\mathrm{ll}(\rho)}f^{\mu\setminus\rho}$. \label{MNrule}
\end{thm}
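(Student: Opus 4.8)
The plan is to pass to the ring of symmetric functions, where the rule becomes a Pieri-type product identity. Recall the classical expansion $p_\alpha=\sum_{\mu\vdash m}\chi^\mu(\alpha)\,s_\mu$ of a power-sum symmetric function in the Schur basis, $\chi^\mu(\alpha)$ being the value of $\chi^\mu$ on the conjugacy class of cycle type $\alpha\vdash m$ (this is the Frobenius characteristic map, which I take as known input, being at the same classical level as Theorems \ref{Hook} and \ref{Branching}). Since $\pi$ and the $r$-cycle $\sigma$ have disjoint supports, $\eta=\pi\sigma$ has cycle type $(\alpha,r)$, where $\alpha\vdash n-r$ is the cycle type of $\pi$, so $p_{(\alpha,r)}=p_r\cdot p_\alpha$. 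Substituting $p_\alpha=\sum_{\nu\vdash n-r}\chi^\nu(\pi)\,s_\nu$ and extracting the coefficient of $s_\mu$ from $p_r\cdot p_\alpha=\sum_{\mu\vdash n}\chi^\mu(\eta)\,s_\mu$, the theorem reduces, by linear independence of the $s_\mu$, to the product formula
\[p_r\cdot s_\nu=\sum_{\rho}(-1)^{\mathrm{ll}(\rho)}\,s_{\nu\cup\rho},\]
the sum running over the skew-hooks $\rho$ of $r$ cells attachable to $\nu$; indeed the partitions $\nu$ contributing to a fixed $\mu$ are exactly the $\mu\setminus\rho$ for $\rho\in\operatorname{SH}_r^\mu$, with $\chi^\nu(\pi)=\chi^{\mu\setminus\rho}(\pi)$. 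For $r=1$ this is already Theorem \ref{Branching}, since skew-hooks of one cell are internal corners, all of leg length $0$.

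Establishing this product formula is, I expect, the crux. One route combines the Newton-type identity $p_r=\sum_{i=0}^{r-1}(-1)^i s_{(r-i,1^i)}$, writing a power sum as an alternating sum of hook Schur functions, with the Littlewood--Richardson rule for multiplying $s_\nu$ by a hook: the coefficient $c^\mu_{\nu,(r-i,1^i)}$ counts skew tableaux of shape $\mu\setminus\nu$ of constrained content, and on summing over $i$ against the signs $(-1)^i$ one checks that the total coefficient of $s_\mu$ vanishes unless $\mu\setminus\nu$ is a connected border strip, in which case it collapses to $(-1)^{\mathrm{ll}(\mu\setminus\nu)}$. An alternative is a direct bijective or sign-reversing-involution argument on the combinatorial side --- for instance realizing $s_\nu$ via non-intersecting lattice paths, where multiplying by $p_r$ inserts a single long step of length $r$ and $(-1)^{\mathrm{ll}(\rho)}$ records the parity of how many paths that step crosses. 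Either way, the genuine content is the sign cancellation that singles out border strips; the rest is formal.

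Finally, I would record the parallel representation-theoretic derivation, which stays closer to the apparatus of Section \ref{ClassRes}. Iterating the Branching Rule $r$ times gives $\operatorname{Res}^{S_n}_{S_{n-r}}\mathcal{S}^\mu\cong\bigoplus_{\nu\vdash n-r}(\mathcal{S}^\nu)^{\oplus f^{\mu\setminus\nu}}$, and the multiplicity space of $\mathcal{S}^\nu$ can be promoted to the skew Specht module of shape $\mu\setminus\nu$, carrying an action of $S_r$ (permuting the $r$ removed boxes) that commutes with $S_{n-r}$; taking the trace of $\eta=\pi\sigma$ then yields $\chi^\mu(\eta)=\sum_{\nu\vdash n-r}\chi^\nu(\pi)\cdot\theta^{\mu\setminus\nu}(\sigma)$, where $\theta^{\mu\setminus\nu}$ is the character of that skew module and $\sigma$ is a full $r$-cycle in $S_r$. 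It then remains only to prove that $\theta^{\mu\setminus\nu}(\sigma)$ equals $(-1)^{\mathrm{ll}(\mu\setminus\nu)}$ when $\mu\setminus\nu$ is a skew-hook of $r$ cells and $0$ otherwise --- equivalently, that $\langle p_r,s_{\mu\setminus\nu}\rangle$ collapses to a single term --- which is again the border-strip computation above. Thus both routes bottom out at the same point; I would present the symmetric-function version as primary, while noting that, the rule being entirely classical, one may also simply cite \cite{[S]}.
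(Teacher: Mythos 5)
This is Theorem \ref{MNrule}, the Murnaghan--Nakayama Rule, which the paper does not prove at all: it is quoted as a classical result with a pointer to Theorem 4.10.2 of \cite{[S]}, exactly as your own closing sentence suggests one may do. So there is no in-paper argument to compare against; what can be assessed is whether your sketch is a sound derivation of the classical fact. In outline it is: the reduction via the Frobenius characteristic ($p_\alpha=\sum_\mu\chi^\mu(\alpha)s_\mu$, disjoint supports giving $p_{(\alpha,r)}=p_r\,p_\alpha$, and extraction of the coefficient of $s_\mu$) is the standard route, and it correctly reduces the theorem to the border-strip Pieri rule $p_r\,s_\nu=\sum_\rho(-1)^{\mathrm{ll}(\rho)}s_{\nu\cup\rho}$. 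You rightly identify that identity as the crux, but you do not actually prove it --- you offer two plausible strategies (the hook expansion $p_r=\sum_{i=0}^{r-1}(-1)^i s_{(r-i,1^i)}$ combined with the Littlewood--Richardson rule for hooks, with a sign cancellation "one checks"; or a lattice-path involution) without carrying either out. Both do work and are in the literature, so this is a deferral to a classical lemma rather than an error, but as a self-contained proof the argument is incomplete at precisely the point where all the content lives. Your second, representation-theoretic route has the same structure and bottoms out at the same computation, as you note. Given that the paper's stance is to cite rather than prove, the appropriate conclusion is that your sketch is correct as far as it goes and consistent with the standard treatment, and that in the context of this paper one would simply cite \cite{[S]} as the authors do.
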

As when $r=1$ in Theorem \ref{MNrule} we have $\operatorname{SH}_{1}^{\mu}=\operatorname{IC}(\mu)$, with all elements having leg length 0, and $\sigma$ is trivial (so that $\eta \in S_{n}$ is the natural image of $\pi \in S_{n-1}$), this reproduces Theorem \ref{Branching}.

For more background on the representation theory of symmetric groups, see, e.g., \cite{[S]}.

\section{The Case of Transpositions \label{Trans}}

We are interested in combinatorial expressions for the coefficients $c_{\lambda,h}^{\sigma,s}$ when $\sigma$ is a cycle, and $s$ is chosen appropriately. Theorem \ref{dimform} achieves this result for trivial $\sigma$, by taking $s=0$. Corollary \ref{coeffind} implies that we can determine the values of all the coefficients by induction, except for the new one with $h=k$ (which is the constant term in this expansion). This shortens the analysis of these coefficients considerably, but we need other tools for finding what the constant one is.

For general cycles, we will apply the Murnaghan--Nakayama Rule from Theorem \ref{MNrule}. However, in this section we consider the case of transpositions, as this case is the next simplest case after the trivial element considered in \cite{[CZ]}, and we also have an explicit, direct formula for the value of the characters.

Recall that if $\mu \vdash n$, then $\mu^{t}=\{\mu^{t}_{i}\}_{i=1}^{\mu_{1}}$ is the \emph{transpose partition}, whose diagram is obtained by reflecting that of $\mu$ along the diagonal, and in which $\mu^{t}_{j}=|\{1 \leq i\leq\ell(\mu)\;|\;\mu_{i} \geq j\}|$, so that in particular $\mu^{t}_{1}=\ell(\mu)$. The formula for the value of the character at a transposition is named after Frobenius who proved it in \cite{[F]}.
\begin{prop}
Assume that $n\geq2$, take $\mu \vdash n$, and let $\tau \in S_{n}$ be a transposition. Then
\[\chi^{\mu}(\tau)=\frac{f^{\mu}}{\binom{n}{2}}\sum_{i\geq1}\bigg[\binom{\mu_{i}}{2}-\binom{\mu^{t}_{i}}{2}\bigg].\] \label{Frobenius}
\end{prop}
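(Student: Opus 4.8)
The plan is to prove Proposition~\ref{Frobenius} by induction on $n$, using the Branching Rule (Theorem~\ref{Branching}) applied to a transposition $\tau \in S_{n-1} \subseteq S_n$, together with the Hook Formula (Theorem~\ref{Hook}) to track how the prefactor $f^\mu/\binom{n}{2}$ transforms under removal of an internal corner. The base case is $n=2$: here $\mu$ is either $(2)$ (trivial representation, $\chi^\mu(\tau)=1$, and the sum evaluates to $\binom{2}{2}-\binom{1}{2}-\binom{1}{2}=1$ with $f^\mu/\binom{2}{2}=1$) or $(1,1)$ (sign representation, $\chi^\mu(\tau)=-1$, and one checks the right-hand side gives $-1$), so both cases hold. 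For $n \geq 3$ one picks a transposition $\tau$ lying inside $S_{n-1}$, so that $\chi^\mu(\tau) = \sum_{v \in \operatorname{IC}(\mu)} \chi^{\mu-v}(\tau)$, and each $\chi^{\mu-v}(\tau)$ is given by the claimed formula with $n$ replaced by $n-1$.

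The key computation is then to show that $\sum_{v \in \operatorname{IC}(\mu)} \frac{f^{\mu-v}}{\binom{n-1}{2}} \sum_{i \geq 1}\big[\binom{(\mu-v)_i}{2} - \binom{(\mu-v)^t_i}{2}\big]$ collapses to $\frac{f^\mu}{\binom{n}{2}}\sum_{i\geq 1}\big[\binom{\mu_i}{2}-\binom{\mu^t_i}{2}\big]$. The natural device is to write $q(\mu) := \sum_{i \geq 1}\big[\binom{\mu_i}{2} - \binom{\mu^t_i}{2}\big]$ and observe that when a box $v$ sitting at the end of row $a$ (with $\mu_a = b$, say, so $v=(a,b)$) is removed, $q$ changes by a controlled amount: the term $\binom{\mu_a}{2}$ drops to $\binom{\mu_a-1}{2}$, a decrease of $\mu_a - 1$, while the term $\binom{\mu^t_b}{2}$ drops to $\binom{\mu^t_b - 1}{2}$, a decrease of $\mu^t_b - 1$; since removing $v$ subtracts $\binom{\mu^t_b}{2}$'s loss with a minus sign, we get $q(\mu-v) = q(\mu) - (\mu_a - 1) + (\mu^t_b - 1) = q(\mu) - (\mu_a - \mu^t_b)$. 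One recognizes $\mu_a - \mu^t_b$ (for $v=(a,b)$ an internal corner) as the ``content-like'' quantity $\operatorname{arm}(v) - \operatorname{leg}(v)$ of the corner, i.e.\ essentially the hook-content; in particular $\sum_{v \in \operatorname{IC}(\mu)} (\mu_{a(v)} - \mu^t_{b(v)})$ and $\sum_{v \in \operatorname{IC}(\mu)} f^{\mu-v}$ must be combined. Writing everything over the common denominator and using $\binom{n}{2} = \binom{n-1}{2} + (n-1)$, the identity to verify becomes
\[
\sum_{v \in \operatorname{IC}(\mu)} f^{\mu-v}\big[q(\mu) - (\mu_{a(v)}-\mu^t_{b(v)})\big] \cdot \binom{n}{2} = f^\mu\, q(\mu) \cdot \binom{n-1}{2},
\]
which, after substituting $f^\mu = \sum_v f^{\mu-v}$ (the Branching Rule at the trivial element, Theorem~\ref{Branching}) on the right, reduces to
\[
-\binom{n}{2}\sum_{v}f^{\mu-v}(\mu_{a(v)}-\mu^t_{b(v)}) = -(n-1)\,q(\mu)\sum_v f^{\mu-v}.
\]
So the crux is the weighted identity $\sum_{v \in \operatorname{IC}(\mu)} f^{\mu-v}\,c_v = \tfrac{2}{n}\, q(\mu)\, f^\mu$, where $c_v := \mu_{a(v)} - \mu^t_{b(v)}$ is (twice? up to a shift) the content of the removed cell $v$. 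Indeed $q(\mu) = \sum_{u \in \mu} c(u)$ where $c(u)=j-i$ is the content, since $\sum_{i}\binom{\mu_i}{2} = \sum_{u} \operatorname{arm}^{+}(u)$-type sums telescope to $\sum_u c(u)$ split appropriately — this is the classical identity $\sum_{u \in \mu} c(u) = \sum_i \binom{\mu_i}{2} - \sum_j \binom{\mu^t_j}{2}$.

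The main obstacle is therefore proving the weighted branching identity $\sum_{v \in \operatorname{IC}(\mu)} c(v)\, f^{\mu-v} = \tfrac{2}{n} \big(\sum_{u\in\mu} c(u)\big) f^\mu$, where $c(v)$ is the content of the corner cell. I expect to handle this via the Hook Formula: writing $f^{\mu-v}/f^\mu$ as an explicit product of hook-length ratios (only the hooks in the row and column of $v$ change, giving the standard telescoping expression), the left-hand side becomes a sum over corners of a rational function in the $\mu_i$'s, and the identity is a polynomial identity that can be proved by the same partial-fractions / interpolation technique used to establish the Hook Formula itself (or cited from the literature on content/hook-sum identities). Alternatively — and this may be cleaner for a short writeup — one avoids contents entirely and argues directly: fix the formula as a conjectural polynomial $P_\mu(n)$ in $n$ (for the family $(n-k,\lambda)$), note both $\chi^{(n-k,\lambda)}(\tau)$ and the claimed expression satisfy the first-difference recursion of Equation~(\ref{indforpol}) by Corollary~\ref{coeffind}-style reasoning, and then pin down the remaining constant by a single explicit evaluation (e.g.\ at $n = k+\lambda_1$, a hook-shaped or small case where $\chi^\mu(\tau)$ is computable directly). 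Since Proposition~\ref{Frobenius} is classical and attributed to Frobenius, it is also legitimate simply to cite \cite{[F]} and omit the proof; but if a self-contained argument is wanted, the induction-plus-Hook-Formula route above is the most direct, with the weighted corner identity being the one genuinely non-trivial step.
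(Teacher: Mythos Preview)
The paper does not prove Proposition~\ref{Frobenius} at all: it is introduced as a classical result of Frobenius, cited from \cite{[F]}, and then used as a black box in Proposition~\ref{bound2} and Lemma~\ref{basis2}. So the option you mention in your last sentence --- simply citing \cite{[F]} --- is precisely what the paper does.

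Your inductive setup is correct (the base case $n=2$, the Branching Rule reduction, the computation $q(\mu-v)=q(\mu)-c(v)$ with $c(v)=j-i$ the content, and the identity $q(\mu)=\sum_{u\in\mu}c(u)$ are all valid). But the induction does not reduce the difficulty: the ``weighted corner identity''
\[
\sum_{v\in\operatorname{IC}(\mu)}c(v)\,f^{\mu-v}=\frac{2}{n}\,q(\mu)\,f^{\mu}
\]
that you isolate as the crux is exactly the inductive step rewritten, and you have not proved it --- the hook-ratio/partial-fractions route you sketch is where all the work lies, not a routine cleanup. (The conceptually cleanest proof of this identity runs through the Jucys--Murphy elements, whose sum $\sum_{i<j}(i\,j)$ acts on $\mathcal{S}^{\mu}$ as the scalar $q(\mu)$; but that is a theorem of comparable depth to Frobenius's formula itself.) So as it stands, your argument reformulates the statement rather than proving it.

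Your proposed alternative via Equation~(\ref{indforpol}) also has a scope problem: that recursion only governs partitions of the special form $\mu=(n-k,\lambda)$ with $\lambda$ fixed and $n$ varying, whereas Proposition~\ref{Frobenius} is asserted for an arbitrary $\mu\vdash n$. Pinning down a constant by evaluating at one $n$ would, at best, establish the formula along a single one-parameter family, not for all $\mu$. The honest route here --- and the one matching the paper --- is to cite Frobenius.
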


\smallskip

When we apply Proposition \ref{Frobenius} to $\mu=(n-k,\lambda)$, Theorem \ref{dimform} shows that the expansion of $f^{\lambda}$ is based only on terms with $0 \leq h\leq\ell(\lambda)$. Hence if $\ell(\lambda) \leq k-2$ then only terms $\binom{n}{m}$ with $m\geq2$ show up in the expansion, and they cancel with the denominator $\binom{n}{2}$ from that proposition. We will keep the notation $\tau$ for a transposition, and get the following result.
\begin{prop}
Set $b_{\lambda,h}^{(2)}:=c_{\lambda,h}^{\tau,2}$ as in Corollary \ref{coeffind}. Then $b_{\lambda,h}^{(2)}=0$ for all $\ell(\lambda)+2<h \leq k$, and in particular $b_{\lambda,k}^{(2)}=0$ wherever $\ell(\lambda)<k-2$. \label{bound2}
\end{prop}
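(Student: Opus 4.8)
The plan is to combine Frobenius's formula from Proposition \ref{Frobenius} with Theorem \ref{dimform} to get an explicit polynomial identity for $\chi^{(n-k,\lambda)}(\tau)$, read off its first few integer roots, and translate these roots into the vanishing of the claimed coefficients using the triangular nature of the basis $\{\binom{n-2}{m}\}_{m\geq0}$. Concretely, apply Proposition \ref{Frobenius} to $\mu=(n-k,\lambda)\vdash n$: here $\mu_{1}=n-k$ and $\mu_{i+1}=\lambda_{i}$ for $i\geq1$, while $\mu^{t}_{j}=1+\lambda^{t}_{j}$ for $1\leq j\leq n-k$, where $\lambda^{t}_{j}:=0$ for $j>\lambda_{1}$. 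Using the identity $\binom{m+1}{2}=\binom{m}{2}+m$ together with $\sum_{j\geq1}\lambda^{t}_{j}=k$, the sum in Proposition \ref{Frobenius} collapses to $\binom{n-k}{2}-k+D_{\lambda}$, where $D_{\lambda}:=\sum_{i\geq1}\binom{\lambda_{i}}{2}-\sum_{j\geq1}\binom{\lambda^{t}_{j}}{2}$ is a constant depending only on $\lambda$. Hence $\binom{n}{2}\chi^{(n-k,\lambda)}(\tau)=f^{(n-k,\lambda)}\big(\binom{n-k}{2}-k+D_{\lambda}\big)$ for all large $n$, and as both sides are polynomials in $n$ (Proposition \ref{polgen}) agreeing for infinitely many values, this is an identity of polynomials.

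Next I would locate roots of $\chi^{(n-k,\lambda)}(\tau)$ as a polynomial. We may assume $\ell(\lambda)+2<k$, since otherwise no $h$ lies in the asserted range and there is nothing to prove; thus $\ell(\lambda)\leq k-3$. By Theorem \ref{dimform}, $f^{(n-k,\lambda)}$ is a $\mathbb{Z}$-linear combination of binomials $\binom{n}{k-h}$ with $0\leq h\leq\ell(\lambda)$, and each such $k-h$ is at least $k-\ell(\lambda)\geq3$; since $\binom{n}{m}$ vanishes at $n=0,1,\ldots,m-1$, the polynomial $f^{(n-k,\lambda)}$ vanishes at $n=0,1,\ldots,k-\ell(\lambda)-1$. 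Therefore the right-hand side of the identity above vanishes there, hence so does $\binom{n}{2}\chi^{(n-k,\lambda)}(\tau)$; since $\binom{n}{2}=\frac{n(n-1)}{2}$ is nonzero for every integer $n\geq2$, the polynomial $\chi^{(n-k,\lambda)}(\tau)$ vanishes at the $k-\ell(\lambda)-2$ consecutive integers $n=2,3,\ldots,k-\ell(\lambda)-1$.

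Finally I would convert these roots into statements about the coefficients $b_{\lambda,h}^{(2)}$. Write $\chi^{(n-k,\lambda)}(\tau)=\sum_{h=0}^{k}(-1)^{h}b_{\lambda,h}^{(2)}\binom{n-2}{k-h}$ and substitute $n=2,3,\ldots$ in turn. Because $\binom{n-2}{m}$ vanishes when $n-2<m$ and equals $1$ when $n-2=m=0$, evaluating at $n=2$ isolates the $h=k$ term and forces $b_{\lambda,k}^{(2)}=0$; inductively, once $b_{\lambda,k}^{(2)}=\cdots=b_{\lambda,k-j+1}^{(2)}=0$ is known, evaluating at $n=2+j$ isolates the $h=k-j$ term and forces $b_{\lambda,k-j}^{(2)}=0$. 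Letting $j$ range from $0$ to $k-\ell(\lambda)-3$, so that $n=2+j$ stays among the roots found above, yields $b_{\lambda,h}^{(2)}=0$ for all $h$ with $\ell(\lambda)+3\leq h\leq k$, that is, for $\ell(\lambda)+2<h\leq k$; the ``in particular'' assertion is the special case $h=k$, which lies in this range precisely when $\ell(\lambda)<k-2$.

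I expect the only genuinely fiddly point to be the bookkeeping in the first step: getting the transpose of $(n-k,\lambda)$ correct and cleanly separating the constant $D_{\lambda}$ from the $n$-dependent part $\binom{n-k}{2}-k$. Once the polynomial identity is established, the rest is the routine ``integer roots pin down the low-order coefficients in a shifted binomial basis'' argument, of the same flavour as Corollary \ref{coeffind}. It is also worth noting that the hypothesis $\ell(\lambda)\leq k-3$ is exactly what makes $k-\ell(\lambda)-1\geq2$, so that the second step really does produce integer roots that are $\geq2$; without it the argument would be vacuous, consistent with the statement being empty in that case.
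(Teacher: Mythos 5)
Your proposal is correct and follows essentially the same route as the paper's proof: both extract the integer roots $n=2,\ldots,k-\ell(\lambda)-1$ of the character polynomial from Frobenius's formula (the roots of $f^{(n-k,\lambda)}$ supplied by Theorem \ref{dimform}, with $\binom{n}{2}$ nonvanishing there), and then convert these roots into the vanishing of the low-order coefficients in the basis $\big\{\binom{n-2}{m}\big\}_{m\geq0}$. The only cosmetic differences are that you spell out the triangularity induction instead of citing (the translated) Lemma 1.3 of \cite{[CZ]}, and you keep the identity in the product form $\binom{n}{2}\chi^{(n-k,\lambda)}(\tau)=f^{(n-k,\lambda)}\big(\binom{n-k}{2}-k+D_{\lambda}\big)$ rather than dividing out the quadratic as a polynomial.
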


\begin{proof}
Recall from Lemma 1.3 of \cite{[CZ]} that given a polynomial $p$ of degree $d$, expanded as $\sum_{i=0}^{d}u_{i}\binom{x}{i}$, and some bound $0 \leq t \leq d$, we have $u_{i}=0$ for all $0 \leq i \leq t$ if and only if $p(m)=0$ for each $0 \leq m \leq t$. It follows that for any $s$, in the expansion $\sum_{j=0}^{d}v_{j}\binom{x-s}{j}$, we have $v_{j}=0$ for all $0 \leq j \leq t$ if and only if $p(m)=0$ for every $s \leq m \leq s+t$.

Now, in case $\ell(\lambda) \geq k-2$ the assertion holds in an empty manner (there is no $h$ in the required range), so assume that $\ell(\lambda)<k-2$. the polynomial $\sum_{h=0}^{\ell(\lambda)}(-1)^{h}a_{\lambda,h}\binom{x}{k-h}$, in which substituting $x=n$ yields $f^{(n-k,\lambda)}$ for large enough $n$ by Theorem \ref{dimform}, vanishes if we substitute $0 \leq m \leq k-\ell(\lambda)-1$ by that lemma, and this range contains at least the number $m=2$. Dividing by $\frac{x(x-1)}{2}$, representing the denominator $\binom{n}{2}$, this quotient remains a polynomial that vanishes for all $2 \leq m \leq k-\ell(\lambda)-1$.

Turning to the parentheses in Proposition \ref{Frobenius} for $(n-k,\lambda)$, one easily verifies that the only dependence on $n$ is the term $\binom{n-k}{2}$ from the first row (as the $n-k-\lambda_{1}$ columns of length 1 contribute copies of $\binom{1}{2}$ to the negative part, which is 0). Hence the parentheses form a quadratic polynomial in $n$, and in the total expression, the vanishing of the substitution of $m$ in that range remains. By (the translated) Lemma 1.3 of \cite{[CZ]}, this implies that in the expansion using $\binom{x-2}{k-h}$ of the desired polynomial, only terms in which $k-h \geq k-\ell(\lambda)-2$, or equivalently $h\leq\ell(\lambda)+2$, may appear as desired. This proves the proposition.
\end{proof}

Proposition \ref{bound2} thus translates to the equality
\begin{equation}
\chi^{(n-k,\lambda)}(\tau)=\sum_{h=0}^{\min\{k,\ell(\lambda)+2\}}(-1)^{h}b_{\lambda,h}^{(2)}\binom{n-2}{k-h}, \label{expinpols}
\end{equation}
where the superscript in the coefficient stands, when compared with the general notation below, for the fact that a transposition is a cycle of order 2.

There are only four types of partitions that do not satisfy $\ell(\lambda)<k-2$, and for which the last coefficient (with $h=k$) is not determined to vanish via Proposition \ref{bound2} and Equation (\ref{expinpols}). As in Lemma 2.6 of \cite{[CZ]}, we have to work out the details on the base cases directly.
\begin{lem}
For every $n \geq k+2$ the following equalities hold for a transposition $\tau$:
\begin{enumerate}
\item $\chi^{(n-k,1^{k})}(\tau)=\binom{n-2}{k}-\binom{n-2}{k-1}$ for $k\geq1$, and it is just $\binom{n-2}{0}=1$ when $k=0$.
\item $\chi^{(n-k,2,1^{k-2})}(\tau)=(k-1)\binom{n-2}{k}-(k-1)\binom{n-2}{k-1}+\binom{n-2}{k-2}$ wherever $k\geq2$.
\item $\chi^{(n-k,3,1^{k-3})}(\tau)=\binom{k-1}{2}\binom{n-2}{k}-\binom{k-1}{2}\binom{n-2}{k-1}+(k-2)\binom{n-2}{k-2}+\sum_{h=3}^{k}(-1)^{h} \binom{n-2}{k-h}$ if $k\geq3$
\item $\chi^{(n-k,2,2,1^{k-4})}(\tau)=\frac{k(k-3)}{2}\binom{n-2}{k}-\frac{k(k-3)}{2}\binom{n-2}{k-1}+(k-3)\binom{n-2}{k-2}-0\binom{n-2}{k-3}- \sum_{h=4}^{k}(-1)^{h}\binom{n-2}{k-h}$ for any $k\geq4$.
\end{enumerate} \label{basis2}
\end{lem}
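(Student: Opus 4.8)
The plan is to prove all four identities by a direct computation with the Frobenius formula (Proposition \ref{Frobenius}) applied to $\mu=(n-k,\lambda)$, where $\lambda$ is one of the four listed shapes. Two inputs feed into it: the dimension $f^{(n-k,\lambda)}$ as an explicit polynomial in $n$, and the inner sum $S_{\lambda}(n):=\sum_{i\geq1}\big[\binom{\mu_{i}}{2}-\binom{\mu^{t}_{i}}{2}\big]$.

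First I would compute $S_{\lambda}(n)$ for a general $\lambda$. Splitting off the first (long) row of $\mu$, noting that the columns of length $1$ coming from that row contribute nothing, and using $\binom{1+m}{2}=\binom{m}{2}+m$ together with $\sum_{j}\lambda^{t}_{j}=k$, one obtains $S_{\lambda}(n)=\binom{n-k}{2}-k+\big(\sum_{i}\binom{\lambda_{i}}{2}-\sum_{j}\binom{\lambda^{t}_{j}}{2}\big)$, in which the last bracket is the sum of contents of $\lambda$. Evaluating that bracket on $(1^{k})$, $(2,1^{k-2})$, $(3,1^{k-3})$ and $(2,2,1^{k-4})$ gives $-\binom{k}{2}$, $1-\binom{k-1}{2}$, $3-\binom{k-2}{2}$ and $1-\binom{k-2}{2}$ respectively, so in each case $S_{\lambda}(n)$ is an explicit quadratic polynomial in $n$; in the first two cases it moreover factors as $\tfrac{1}{2}n(n-2k-1)$ and $\tfrac{1}{2}(n-1)(n-2k)$.

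Next I would record the dimensions. For $\lambda=(1^{k})$ and $\lambda=(2,1^{k-2})$ the Hook Formula (Theorem \ref{Hook}) gives short closed products, namely $f^{(n-k,1^{k})}=\binom{n-1}{k}$ and a similarly compact expression for $f^{(n-k,2,1^{k-2})}$, and the two factors $n$ and $n-1$ needed to divide by $\binom{n}{2}$ are supplied between the dimension and the factored form of $S_{\lambda}(n)$. For $\lambda=(3,1^{k-3})$ and $\lambda=(2,2,1^{k-4})$ one has $\ell(\lambda)\leq k-2$, so by Theorem \ref{dimform} every binomial $\binom{n}{k-h}$ occurring in $f^{(n-k,\lambda)}$ has $k-h\geq2$; hence $\binom{n}{2}$ divides $f^{(n-k,\lambda)}$ as a polynomial, and writing the quotient in the shifted basis $\big\{\binom{n-2}{m}\big\}_{m}$ (using the elementary values of the tableau counts $a_{\lambda,h}=f^{\lambda\setminus(1^{h})}$ for these shapes) produces a polynomial of degree $k-2$. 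In every case one then forms $\chi^{(n-k,\lambda)}(\tau)=f^{(n-k,\lambda)}\cdot S_{\lambda}(n)\big/\binom{n}{2}$, which after the cancellations just described is manifestly a polynomial, and re-expands it in the basis $\big\{\binom{n-2}{m}\big\}_{m}$ using the standard rules for multiplying a binomial coefficient by a linear factor; reading off the coefficients yields the four stated formulas. Since all four identities are between polynomials in $n$, it is enough to verify them for all sufficiently large $n$, which disposes of the small-$k$ corner cases implicit in the statement.

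The conceptual content here is light---everything reduces to Frobenius's formula together with known dimensions---so the real work, and the only place that needs care, is the final re-expansion in the $\binom{n-2}{m}$ basis: in particular, keeping track of signs so as to produce the alternating tails $\sum_{h=3}^{k}(-1)^{h}\binom{n-2}{k-h}$ in part (3) and $\sum_{h=4}^{k}(-1)^{h}\binom{n-2}{k-h}$ in part (4), which are exactly the terms surviving beyond degree $k-2$ once $\ell(\lambda)$ is allowed to equal $k-2$.
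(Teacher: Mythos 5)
Your plan is the same in essence as the paper's: both hinge on Frobenius's formula for $\chi^{\mu}(\tau)$ applied to $\mu=(n-k,\lambda)$, an explicit form of the dimension, division by $\binom{n}{2}$, and a re-expansion in the basis $\big\{\binom{n-2}{m}\big\}_{m}$. Your intermediate claims all check out: the identity $S_{\lambda}(n)=\binom{n-k}{2}-k+\big(\sum_{i}\binom{\lambda_{i}}{2}-\sum_{j}\binom{\lambda^{t}_{j}}{2}\big)$ is correct, the four content sums are right, and the factorizations $\tfrac{1}{2}n(n-2k-1)$ and $\tfrac{1}{2}(n-1)(n-2k)$ agree with the parenthetical expressions the paper computes case by case; your uniform packaging via contents is in fact a little cleaner than the paper's. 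The one genuine difference of route is in cases 3 and 4: the paper grinds through the hook products of $(n-k,3,1^{k-3})$ and $(n-k,2,2,1^{k-4})$ directly, whereas you expand $f^{(n-k,\lambda)}$ via Theorem \ref{dimform} and divide termwise by $\binom{n}{2}$, which is legitimate precisely because $\ell(\lambda)=k-2$ forces every binomial index to be at least $2$ (and, e.g., for $(3,1^{k-3})$ the quotient $\binom{k-h}{2}\binom{n}{k-h}\big/\binom{n}{2}$ collapses to $\binom{n-2}{k-h-2}$ for $h\geq1$, which is pleasant). What you have not done is the step you yourself identify as the only one needing care: the actual multiplication by the quadratic $S_{\lambda}(n)$ and re-expansion in the shifted basis, which is where the coefficients $\binom{k-1}{2}$, $\tfrac{k(k-3)}{2}$, the alternating tails $\sum_{h\geq3}(-1)^{h}\binom{n-2}{k-h}$ and $-\sum_{h\geq4}(-1)^{h}\binom{n-2}{k-h}$, and the vanishing $h=3$ coefficient in case 4 are produced. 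Since the entire content of this lemma is that explicit list of coefficients (it is the base case feeding Theorem \ref{transthm}), the argument as written is a correct and workable plan rather than a finished proof; nothing in it would fail, but cases 3 and 4 still require the several lines of Pascal-identity bookkeeping that the paper carries out.
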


\begin{proof}
We apply Proposition \ref{Frobenius} for $\mu=(n-k,\lambda)$ once again, where we now evaluate the dimension $f^{\lambda}$ via Theorem \ref{Hook} directly (the expression from \cite{[CZ]} is the longest for these partitions, and is thus less useful here). The term in the parentheses there is $\binom{n-k}{2}+\sum_{i}\binom{\lambda_{i}}{2}-\sum_{i}\binom{\lambda^{t}_{i}+1}{2}$.

In case 1, when $k\geq1$ the dimension is $\frac{n!}{k!n\cdot(n-1-k)!}=\binom{n-1}{k}$, and after dividing it by $\binom{n}{2}$ we get $\frac{2}{kn}\binom{n-2}{k-1}$ by expanding both binomials. As the expression in the parentheses becomes \[\textstyle{\binom{n-k}{2}-\binom{k+1}{2}=\frac{(n-k)(n-k-1)-k(k+1)}{2}=\frac{n(n-2k-1)}{2}},\] the total expression is $\frac{n-2k-1}{k}\binom{n-2}{k-1}=\frac{n-k-1}{k}\binom{n-2}{k-1}-\binom{n-2}{k-1}=\binom{n-2}{k}-\binom{n-2}{k-1}$, as desired. If $k=0$ then the representation is the trivial one, so that the value is indeed $\binom{n-2}{0}=1$.

For case 2, with $k\geq2$, the Hook Formula expresses the dimension as $\frac{n!}{(n-k-2)!(k-2)!(n-k)k(n-1)}$, whose quotient over $\binom{n}{2}$ is $\frac{2\cdot(n-2)!}{(n-k-2)!(k-2)!(n-k)k(n-1)}$, and the combination in the parentheses is \[\textstyle{\binom{n-k}{2}+1-\binom{k}{2}-1=\frac{(n-k)(n-k-1)-k(k-1)}{2}=\frac{n^{2}-(2k+1)n+2k}{2}=\frac{(n-1)(n-2k)}{2}}.\] By writing $n-2k$ as $(n-k)-k$, the first summand gives $\frac{(n-2)!}{(n-k-2)!(k-2)!k}=(k-1)\binom{n-2}{k}$. The second one becomes \[\textstyle{-\frac{(n-2)!}{(n-k-2)!(k-2)!(n-k)}=-(n-k-1)\binom{n-2}{k-2}=\binom{n-2}{k-2}-(n-k)\binom{n-2}{k-2}=\binom{n-2}{k-2}- (k-1)\binom{n-2}{k-1}},\] and the result follows.

Turning to case 3, the dimension is now $\frac{n!}{2(n-k-3)!(k-3)!(n-2)(n-k-1)(n-k)k}$, dividing which by $\binom{n}{2}$ gives $\frac{(n-2)!}{(n-k-3)!(k-3)!(n-2)(n-k-1)(n-k)k}$. The terms in the parentheses here are \[\textstyle{\binom{n-k}{2}+\binom{3}{2}-\binom{k-1}{2}-1-1=\frac{(n-k)(n-k-1)-(k-1)(k-2)}{2}+1=\frac{(n-2)(n-2k+1)}{2}+1}.\] The product with first multiplier is $\frac{(n-2k+1)(k-1)(k-2)(n-k-2)}{2}\cdot\frac{(n-2)!}{k!(n-k)!}$, which we write as $\binom{k-1}{2}$ times $\big[(n-k)(n-k-1)-k(n-k)+2k-2\big]\frac{(n-2)!}{k!(n-k)!}$, the first three of which producing the first three desired terms. The last term here and the remaining one from the $+1$ above combine to give $(k-1)(k-2)\big[-\frac{(n-2)!}{k!(n-k)!}+\frac{(n-3)!(n-2-k)}{k!(n-k)!}\big]$, where the expression in parentheses is $-\frac{(n-3)!(n-2-k)}{(k-1)!(n-k)!}$ (because $(n-2)-(n-2-k)=k$), and multiplication by $(k-1)(k-2)$ reduces to $-\binom{n-3}{k-3}$. But as the latter expression is $+\sum_{h=3}^{k}(-1)^{h}\binom{n-3}{k-h}$ as in the proof of Lemma 2.6 of \cite{[CZ]} (and noting that we start with the odd value $h=3$), this establishes the result in this case as well.

Finally, in case 4 we have the dimension $\frac{n!}{2(n-k-2)!(k-4)!(n-k+1)(n-2)(k-1)(k-2)}$, dividing by $\binom{n}{2}$ produces $\frac{(n-2)!}{(n-k-2)!(k-4)!(n-k+1)(n-2)(k-1)(k-2)}$, and the parentheses are now \[\textstyle{\binom{n-k}{2}+1+1-\binom{k-1}{2}-\binom{3}{2}=\frac{(n-k)(n-k-1)-(k-1)(k-2)}{2}-1=\frac{(n-2)(n-2k+1)}{2}-1}.\] The first multiplier now gives $\frac{k(k-3)}{2}$ times \[\textstyle{\binom{n-2}{k}\frac{n-2k+1}{n-k+1}=\binom{n-2}{k}-\frac{k}{n-k+1}\binom{n-2}{k}=\binom{n-2}{k}-\frac{n-k-1}{n-k+1}\binom{n-2}{k-1}= \binom{n-2}{k}-\binom{n-2}{k-1}+\frac{2}{n-k+1}\binom{n-2}{k-1}},\] the first of those are in the desired formula. The last term here and the remaining one combine to $\frac{k-3}{n-k+1}$ times \[\textstyle{k\binom{n-2}{k-1}-\binom{n-3}{k-1}=(k-1)\binom{n-2}{k-1}+\binom{n-3}{k-2}=(n-k)\binom{n-2}{k-2}+\binom{n-3}{k-2}= (n-k+1)\binom{n-2}{k-2}-\binom{n-3}{k-3}}\] via two applications of Pascal's identity. Recalling the external coefficient, the first summand here is the desired asserted term, and after expanding $-\frac{k-3}{n-k+1}\binom{n-3}{k-3}=-\binom{n-3}{k-4}=-\sum_{h=4}^{k}(-1)^{h}\binom{n-2}{k-h}$ (as here we start with $h=4$, which is even), the desired assertion, with the skipping over $\binom{n-2}{k-3}$, are established. This completes the proof of the lemma.
\end{proof}
In particular, Lemma \ref{basis2} produces the polynomial formula from Equation (\ref{expinpols}) for these partitions once again. Note that unlike the coefficients $a_{\lambda,h}$ from \cite{[CZ]}, which are always positive as long as $0 \leq h\leq\ell(\lambda)$, the values of the coefficients $b_{\lambda,h}^{(2)}$ from Equation (\ref{expinpols}) that show up via that lemma are not---indeed, in case 4 there it gives $b_{(2,2,1^{k-4}),h}^{(2)}=-1$ for any $4 \leq h \leq k$, while we also have the vanishing value of $b_{(2,2,1^{k-4}),3}^{(2)}$. One can verify that the resulting coefficients with $h<k$ also satisfy the equality from Corollary \ref{coeffind}.

\smallskip

In order to give the combinatorial value of the coefficients from Equation (\ref{expinpols}), we make the following definition, for which we recall the notation from Definition \ref{skewtabs}.
\begin{defn}
Fix $\lambda \vdash k$ and $0 \leq h \leq k$. If $h\geq4$ then we set $b_{\lambda,h}^{+}$ to be $f^{\lambda\setminus(3,1^{h-3})}$, and $b_{\lambda,h}^{-}:=f^{\lambda\setminus(2,2,1^{h-4})}$. When $0 \leq h\leq3$ we define $b_{\lambda,h}^{+}:=f^{\lambda\setminus(h)}$, and we set $b_{\lambda,h}^{-}$ to simply be 0. \label{vals2}
\end{defn}

We can now state and prove our main theorem for transpositions.
\begin{thm}
Let $\lambda \vdash k$ be a partition, take $n \geq k+\lambda_{1}$, and consider a transposition $\tau \in S_{n}$. Expand $\chi^{(n-k,\lambda)}(\tau)$ as in Equation (\ref{expinpols}), and then for any $0 \leq h\leq\ell(\lambda)+2$, the coefficient $b_{\lambda,h}^{(2)}$ from that equation equals the difference $b_{\lambda,h}^{+}-b_{\lambda,h}^{-}$ of the numbers from Definition \ref{vals2}. \label{transthm}
\end{thm}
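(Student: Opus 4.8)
The plan is to prove Theorem \ref{transthm} by induction on $k=|\lambda|$, exactly mirroring the strategy used for the dimension case in \cite{[CZ]}: first check that the combinatorial quantities $b_{\lambda,h}^{+}-b_{\lambda,h}^{-}$ satisfy the same recursion in the inner corners that Corollary \ref{coeffind} gives for the analytic coefficients $b_{\lambda,h}^{(2)}$ when $h<k$, and then pin down the remaining constant term $h=k$ directly. The base cases of the induction are precisely the four families of partitions isolated in Lemma \ref{basis2}, namely $\lambda=(1^k)$, $(2,1^{k-2})$, $(3,1^{k-3})$, and $(2,2,1^{k-4})$ (together with the trivial $k\le 3$ situations), since by Proposition \ref{bound2} every other $\lambda$ has $b_{\lambda,k}^{(2)}=0$ and is thus covered by the inductive step alone.

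For the inductive step with $h<k$, I would invoke Corollary \ref{coeffind}, which gives $b_{\lambda,h}^{(2)}=\sum_{v\in\operatorname{IC}(\lambda)}b_{\lambda-v,h}^{(2)}$, and Lemma \ref{recskew}, which gives $f^{\lambda\setminus\nu}=\sum_{v\in\operatorname{IC}(\lambda)}f^{(\lambda-v)\setminus\nu}$ for any fixed $\nu\neq\lambda$. Applying the latter to $\nu=(3,1^{h-3})$ (or $(h)$ when $h\le 3$) and to $\nu=(2,2,1^{h-4})$ separately shows that $b_{\lambda,h}^{+}-b_{\lambda,h}^{-}$ obeys the very same inner-corner recursion; since $h<k=|\lambda|$ forces $\nu\neq\lambda$ in both cases, Lemma \ref{recskew} applies cleanly. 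So if the equality $b_{\lambda-v,h}^{(2)}=b_{\lambda-v,h}^{+}-b_{\lambda-v,h}^{-}$ holds for all $v\in\operatorname{IC}(\lambda)$ and all relevant $h$ (including $h=|\lambda-v|=k-1$, by the inductive hypothesis), then it holds for $\lambda$ and all $h<k$.

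The remaining work is the constant term $h=k$. When $\ell(\lambda)<k-2$ Proposition \ref{bound2} tells us $b_{\lambda,k}^{(2)}=0$, and on the combinatorial side I must check that $b_{\lambda,k}^{+}-b_{\lambda,k}^{-}=f^{\lambda\setminus(3,1^{k-3})}-f^{\lambda\setminus(2,2,1^{k-4})}=0$ as well; this follows because for $\nu\vdash k$ we have $\nu\subseteq\lambda$ only if $\nu=\lambda$, and neither $(3,1^{k-3})$ nor $(2,2,1^{k-4})$ equals $\lambda$ under the hypothesis $\ell(\lambda)<k-2$ — wait, one must be slightly careful here: if $\lambda=(3,1^{k-3})$ then $\ell(\lambda)=k-2$, not $<k-2$, so it is indeed excluded; similarly $(2,2,1^{k-4})$ has length $k-2$. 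Hence for all $\lambda$ not among the four base families, both $f^{\lambda\setminus(3,1^{k-3})}$ and $f^{\lambda\setminus(2,2,1^{k-4})}$ vanish, matching $b_{\lambda,k}^{(2)}=0$. For the four base families themselves, I read off $b_{\lambda,k}^{(2)}$ from Lemma \ref{basis2} and compare with the explicit skew-tableaux counts: for $\lambda=(1^k)$ the count $f^{(1^k)\setminus(k)}=0$ (as $(k)\not\subseteq(1^k)$ for $k\ge 2$) against the stated $-1$ coefficient... so in fact for these families the constant-term check is not trivially zero and I should instead verify each by hand — e.g. $f^{(3,1^{k-3})\setminus(3,1^{k-3})}=1$, $f^{(3,1^{k-3})\setminus(2,2,1^{k-4})}=0$, giving $b^{+}-b^{-}=1$, matching the coefficient of $\binom{n-2}{0}$ in case 3 of Lemma \ref{basis2} (which is $(-1)^k$, and since we are tracking the signed coefficient $b_{\lambda,k}^{(2)}$ with sign $(-1)^k$ already stripped, this is $+1$); and symmetrically $f^{(2,2,1^{k-4})\setminus(3,1^{k-3})}=0$, $f^{(2,2,1^{k-4})\setminus(2,2,1^{k-4})}=1$, giving $b^{+}-b^{-}=-1$, matching case 4.

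The main obstacle is bookkeeping the signs and the shifted-binomial indexing consistently between Lemma \ref{basis2} and Definition \ref{vals2}, together with confirming that the four base-case polynomial expressions from Lemma \ref{basis2} genuinely reproduce, coefficient by coefficient, the alternating skew-tableaux counts $b_{\lambda,h}^{+}-b_{\lambda,h}^{-}$ for every $h$ (not just $h=k$) — this is the analogue of Lemma 2.6 of \cite{[CZ]} and requires identifying, for instance, that the tail $\sum_{h=3}^{k}(-1)^h\binom{n-2}{k-h}$ in case 3 corresponds to $f^{(3,1^{k-3})\setminus(3,1^{h-3})}=1$ for $3\le h\le k$ and $f^{(3,1^{k-3})\setminus(2,2,1^{h-4})}=0$, and similarly the interpretation of the low-$h$ terms via hook-length or direct enumeration of the relevant skew shapes. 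Once the base cases are matched term-by-term and the inductive recursion is in place, the theorem follows.
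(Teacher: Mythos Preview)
Your proposal is correct and follows essentially the same approach as the paper: induction on $k$, with Corollary \ref{coeffind} and Lemma \ref{recskew} handling $h<k$, and the case $h=k$ checked directly---trivially zero via Proposition \ref{bound2} when $\ell(\lambda)<k-2$, and read off from Lemma \ref{basis2} for the four remaining families. Your final paragraph's plan to verify the four base-family expansions coefficient-by-coefficient for \emph{every} $h$ is unnecessary: once the $h=k$ constants match, the inner-corner recursion (which applies to those families just as to any other $\lambda$) propagates the equality down to all smaller $h$ automatically, so only the single constant term per family needs to be checked by hand.
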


\begin{proof}
Take any $h\geq0$, and $\lambda \vdash k$. If $k<h$ then $b_{\lambda,h}^{(2)}=0$ in Proposition \ref{bound2} and Equation (\ref{expinpols}), and since $\lambda$ cannot contain a partition $\nu \vdash h$, we get that $b_{\lambda,h}^{+}=b_{\lambda,h}^{-}=0$ as well via Definitions \ref{vals2} and \ref{skewtabs}. The same argument shows that $b_{\lambda,h}^{(2)}=b_{\lambda,h}^{+}=b_{\lambda,h}^{-}=0$ in case $k=h$ and $\ell(\lambda)<k-2$.

When $k=h$ and $\ell(\lambda) \geq k-2$, then $\lambda$ is one of the partitions from Lemma \ref{basis2}, and we need to compare the constant term of the expansion there with the asserted value. Note that since $k=h$, the only situation in which the partition of $h$ from Definition \ref{vals2} is contained in $\lambda$ is where it equals $\lambda$.

For case 1 this happens for $k=0$ and $k=1$ but not otherwise. In case 2 this occurs only when $k=2$. Considering case 3, we have the equality for the partition associated with $b_{\lambda,h}^{+}$ for every value of $k\geq3$. When the partition is the one from case 4, then it corresponds to the one yielding $b_{\lambda,h}^{-}$. Comparing these with the constant terms from Lemma \ref{basis2}, and recalling the sign $(-1)^{h}=(-1)^{k}$ and the negative sign in front of $b_{\lambda,h}^{-}$, we obtain the result for $k=h$.

For $k>h$ we argue by induction, assuming that the result holds for all partitions of $k-1$. Since $b_{\lambda,h}^{(2)}=c_{\lambda,h}^{\tau,2}$ and $h<k$, Corollary \ref{coeffind} (with $\sigma=\tau$ and $s=2$) expresses it as $\sum_{v\in\operatorname{IC}(\lambda)}b_{\lambda-v,h}^{(2)}$. But as $\lambda-v \vdash k-1$ for every such $v$, the induction hypothesis expresses the corresponding summand as $b_{\lambda-v,h}^{+}-b_{\lambda-v,h}^{-}$.

But as Definition \ref{vals2} expresses each of the latter summands as $f^{(\lambda-v)\setminus\nu}$ for $\nu \vdash h$ (or as 0), and $k>h$ so $\lambda\neq\nu$, we can apply Lemma \ref{recskew}, and obtain that $\sum_{v\in\operatorname{IC}(\lambda)}b_{\lambda-v,h}^{\pm}=b_{\lambda,h}^{\pm}$ for each of the signs (including for the negative one when $0 \leq h\leq3$, where both sides vanish). In total, we obtain the expression for $\lambda$ and $h$ as desired. This completes the proof of the theorem.
\end{proof}
In fact, the result of Theorem \ref{transthm} also holds for $\ell(\lambda)+2<h \leq k$, as both sides vanish for such values of $h$ (one via Proposition \ref{bound2} and Equation (\ref{expinpols}), and the other since the two partitions of $h$ from Definition \ref{vals2}, whose length is at least $h-2$, cannot fit inside $\lambda$ of a shorter length). That theorem also yields $b_{\lambda,h}^{(2)}=a_{\lambda^{t},h}$ for $0 \leq h\leq3$ using the notation of Theorem \ref{dimform}, so that in particular $b_{\lambda,0}^{(2)}=b_{\lambda,1}^{(2)}=f^{\lambda}$ (compare the value for $h=0$ with Corollary \ref{specs} below).

\section{The Result for Arbitrary Cycles \label{GenCyc}}

Fix now $r\geq2$, and let $\sigma \in S_{n}$ be a cycle of length $r$. When we apply the Murnaghan--Nakayama Rule from Theorem \ref{MNrule} to the partition $\mu=(n-k,\lambda) \vdash n$ for some $\lambda \vdash k$, where $n \geq k+\lambda_{1}+r$, we get that if $\rho\in\operatorname{SH}_{r}^{(n-k,\lambda)}$ touches the first row then it is contained in it, and otherwise it is, up to translations, and element of $\operatorname{SH}_{r}^{\lambda}$. This expresses $\chi^{(n-k,\lambda)}(\sigma)$ as $f^{(n-k-r,\lambda)}+\sum_{\rho\in\operatorname{SH}_{r}^{\lambda}}(-1)^{\mathrm{ll}(\rho)}f^{((n-r)-(k-r),\lambda\setminus\rho)}$, where we wrote $n-k$ as $(n-r)-(k-r)$ since $\lambda\setminus\rho \vdash k-r$ for $\rho\in\operatorname{SH}_{r}^{\lambda}$, and as Theorem \ref{dimform} expresses those terms using the binomial coefficients $\big\{\binom{n-r}{m}\big\}_{m\geq0}$, we take $s=r$ in this setting.

We thus adopt the notation similar to that from Proposition \ref{bound2} and Equation (\ref{expinpols}), and write
\begin{equation}
\chi^{(n-k,\lambda)}(\sigma)=\sum_{h=0}^{k}(-1)^{h}b_{\lambda,h}^{(r)}\binom{n-r}{k-h}, \label{expgen}
\end{equation}
where $b_{\lambda,h}^{(r)}$ is the coefficient $c_{\lambda,h}^{\sigma,r}$ for our cycle $\sigma$ of length $r$ in the notation from Corollary \ref{coeffind}. We recall again that this corollary determines all the coefficients with index $0 \leq h<k$ in Equation (\ref{expgen}) by the values associated with smaller partitions, so that we only need to find the constant coefficient, associated with $h=k$.

\smallskip

Recall from Definition \ref{vals2} and Theorem \ref{transthm} that for transpositions, the $h$th coefficient in the polynomial from Equation (\ref{expinpols}) that is associated with $\lambda \vdash k$ involved two numbers, each of which is of the form $f^{\lambda\setminus\nu}$ for some $\nu \vdash h$ as in Definition \ref{skewtabs} (or 0), with opposite signs. These partitions $\nu \vdash h$ were precisely those for which the last coefficient in Lemma \ref{basis2} (with $k=h$) did not vanish, and the sign arose from the one appearing in the constant term in that lemma.

For our cycle $\sigma$, of general length $r\geq2$, we thus make the following definition.
\begin{defn}
We call the following partitions \emph{$r$-primary}, and to each of them we attach the corresponding \emph{$r$-sign}:
\begin{enumerate}
\item The partitions $(1^{t}) \vdash t$ for $0 \leq t \leq r-1$, with the $r$-sign being positive.
\item Given $u,v\geq0$ with $u+v \leq r-2$, we take $(r-u-v,2^{u},1^{v}) \vdash r+u$, with $(-1)^{r-u-v}$.
\item Take $0 \leq u \leq r-1$ and $v\geq0$, and to $(r+1-u,2^{u},1^{v}) \vdash r+1+u+v$ we attach $(-1)^{r-u}$.
\end{enumerate}
For any $h\geq0$ we denote by $\Gamma^{r}_{h}$ the set of partitions $\nu \vdash h$ that are $r$-primary, and given $\nu\in\Gamma^{r}_{h}$, we denote by $\varepsilon^{r}_{\nu}$ the corresponding sign. \label{rprim}
\end{defn}

\begin{ex}
Here are the 3-primary partitions with their 3-signs and sizes:
\[\begin{tabular}{|c||c|c|c||c|c|c||c|c|c|} \hline $\nu$ & $\emptyset$ & (1) & (1,1) & (3) & (2,1) & (2,2) & (4,$1^{v}$) & (3,2,$1^{v}$) & (2,2,2,$1^{v}$) \\ \hline $\varepsilon^{3}_{\nu}$ & + & + & + & $-$ & + & + & $-$ & + & $-$ \\ \hline $h$ & 0 & 1 & 2 & 3 & 3 & 4 & $4+v$ & $5+v$ & $6+v$ \\ \hline \end{tabular}\] \label{r3ex}
\end{ex}
The double lines in Example \ref{r3ex} separate between the three different types from Definition \ref{rprim}. The value of $v$ in the third type is arbitrary. We see that the size of $\Gamma_{3}^{h}$ is 1 when $h$ is 0, 1 or 2, it equals 2 in case $h$ equals 3, 4, or 5 (with the small values of $v$ in the third type), and it is 3 for any $h\geq6$.

\begin{rmk}
In general, it is clear from Definition \ref{rprim} that if $0 \leq h<r$ then $\Gamma^{r}_{h}=\{(1^{h})\}$, hence of size 1. For $h\geq2r$ it contains only partitions of the third type, with all values of $0 \leq u \leq r-1$ and $v=h-1-r-u$, hence $r$ partitions in total. In case $r \leq h<2r$ there are the possible values $0 \leq u \leq h-1-r$ and $v=h-1-r-u$ in the third type (which is empty in case $h=r$), as well as those of the second type in which $u=h-r$ and $0 \leq v \leq 2r-2-h$ (which exist only for $h<2r-1$), and in total this gives $r-1$ partitions. \label{sizeGamma}
\end{rmk}

We can now determine the last coefficient, with $h=k$, in Equation (\ref{expgen}).
\begin{lem}
The coefficient $b_{\lambda,k}^{(r)}$, when $\lambda \vdash k$ and $h=k$, equals the $r$-sign $\varepsilon^{r}_{\lambda}$ in case $\lambda$ is $r$-primitive, and vanishes otherwise. \label{genbasis}
\end{lem}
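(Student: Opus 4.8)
The plan is to read off the constant coefficient in the Murnaghan--Nakayama expansion of $\chi^{(n-k,\lambda)}(\sigma)$ recorded just before the lemma, simplify it by means of Theorem \ref{dimform}, and then match the resulting combinatorial quantity against Definition \ref{rprim} by a direct inspection of shapes.

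\emph{Extracting the constant term.} Starting from $\chi^{(n-k,\lambda)}(\sigma)=f^{(n-k-r,\lambda)}+\sum_{\rho\in\operatorname{SH}_{r}^{\lambda}}(-1)^{\mathrm{ll}(\rho)}f^{((n-r)-(k-r),\lambda\setminus\rho)}$, I would apply Theorem \ref{dimform} to each summand with $n-r$ in the role of the variable, obtaining $f^{(n-k-r,\lambda)}=\sum_{j}(-1)^{j}a_{\lambda,j}\binom{n-r}{k-j}$ and, for each $\rho$, $f^{((n-r)-(k-r),\lambda\setminus\rho)}=\sum_{j}(-1)^{j}a_{\lambda\setminus\rho,j}\binom{n-r}{k-r-j}$. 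Comparing the coefficient of $\binom{n-r}{0}$ in Equation (\ref{expgen}) with that in the expansion just obtained, and dividing by $(-1)^{k}$, gives
\[b_{\lambda,k}^{(r)}=a_{\lambda,k}+(-1)^{r}\sum_{\rho\in\operatorname{SH}_{r}^{\lambda}}(-1)^{\mathrm{ll}(\rho)}a_{\lambda\setminus\rho,k-r}.\]
For any partition $\mu\vdash j$ one has $a_{\mu,j}\neq0$ only if $\ell(\mu)\geq j$, which (since $|\mu|=j$) forces $\mu=(1^{j})$, in which case $a_{\mu,j}=1$. Hence $a_{\lambda,k}$ equals $1$ when $\lambda=(1^{k})$ and $0$ otherwise, and in the sum only the skew-hook $\rho$ with $\lambda\setminus\rho=(1^{k-r})$ survives; such a $\rho$, if it exists, must be the skew shape $\lambda/(1^{k-r})$, which lies in $\operatorname{SH}_{r}^{\lambda}$ exactly when $(1^{k-r})\subseteq\lambda$ and $\lambda/(1^{k-r})$ is connected and has no $2\times2$ block (its size is automatically $r$). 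In particular, if $k<r$ then $\operatorname{SH}_{r}^{\lambda}=\emptyset$ and $b_{\lambda,k}^{(r)}=a_{\lambda,k}$, which is $1$ for $\lambda=(1^{k})$ and $0$ otherwise, matching the first type in Definition \ref{rprim}.

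\emph{Classifying the relevant shapes.} It remains, for $k\geq r$ and $m:=k-r$, to decide when $\lambda/(1^{m})$ is a border strip and to compute its leg length. For $\lambda=(1^{k})$ it is the vertical $r$-strip, so $\mathrm{ll}=r-1$ and $b_{\lambda,k}^{(r)}=1+(-1)^{r}(-1)^{r-1}=0$, consistent with $(1^{k})$ not being $r$-primary when $k\geq r$. For $\lambda\neq(1^{k})$ I would first observe that $\lambda/(1^{m})$ is $2\times2$-free precisely when $\lambda_{2}\leq2$ and all rows of $\lambda$ beyond the $(m+1)$st have length $\leq1$; this forces $\lambda=(\lambda_{1},2^{u},1^{v})$ with $\lambda_{1}\geq2$ and $u\leq m$. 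A short connectedness argument then shows $\lambda/(1^{m})$ is connected only in two cases: either the number $1+u$ of rows of $\lambda$ of length $\geq2$ is $\leq m$, which forces $\ell(\lambda)=m$ and, after imposing $|\lambda|=k$, produces exactly the third family $(r+1-u,2^{u},1^{v})$ of Definition \ref{rprim}, with leg length $u$; or $1+u=m+1$, which produces exactly the second family $(r-u-v,2^{u},1^{v})$, with leg length $u+v$; and $1+u\geq m+2$ is incompatible with $2\times2$-freeness. In each case one checks that the parameter ranges reproduce those in Definition \ref{rprim}, that the two families are disjoint, that $a_{\lambda,k}=0$ since $\lambda_{1}\geq2$, and that $(-1)^{r}(-1)^{\mathrm{ll}}$ equals the prescribed $r$-sign (using $(-1)^{2u}=1$); for every other $\lambda$ the shape $\lambda/(1^{m})$ fails to be a border strip, so $b_{\lambda,k}^{(r)}=0$, and such $\lambda$ is not $r$-primary. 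Combining the two paragraphs proves the lemma.

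The main obstacle is this last classification: verifying with care -- including the degenerate boundary cases $m=0$ and $m=1$, where the $2\times2$-freeness conditions collapse -- that the partitions $\lambda\neq(1^{k})$ for which $\lambda/(1^{m})$ is a size-$r$ border strip are precisely the $r$-primary partitions of the second and third types, with leg lengths matching the $r$-signs. The algebraic step of extracting the constant term is routine once Theorem \ref{dimform} and the observation about $a_{\mu,j}$ are in place.
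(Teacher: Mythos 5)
Your proposal is correct and follows essentially the same route as the paper's (first) proof of this lemma: expand the Murnaghan--Nakayama identity for $(n-k,\lambda)$ via Theorem \ref{dimform}, observe that $a_{\mu,j}$ for $\mu\vdash j$ survives only when $\mu=(1^{j})$, and then classify the partitions $\lambda$ for which $\lambda/(1^{k-r})$ is a skew-hook, matching leg lengths to the $r$-signs. The only cosmetic difference is that you phrase the classification via the ``connected and $2\times2$-free'' characterization and split cases by comparing $1+u$ with $m$, whereas the paper argues directly from the skew-hook definition and splits by whether the hook meets the first column; these are the same dichotomy.
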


\begin{proof}
We recall that $\chi^{(n-k,\lambda)}(\sigma)$ equals $f^{(n-r-k,\lambda)}+\sum_{\rho\in\operatorname{SH}_{r}^{\lambda}}(-1)^{\mathrm{ll}(\rho)}f^{((n-r)-(k-r),\lambda\setminus\rho)}$ by Theorem \ref{MNrule}, and we express each term using Theorem \ref{dimform}. After replacing $h$ by $h+r$ in the sum associated with every $\rho\in\operatorname{SH}_{r}^{\lambda}$, we get the expression \[\sum_{h\geq0}(-1)^{h}\bigg[a_{\lambda,h}+\sum_{\rho\in\operatorname{SH}_{r}^{\lambda}}(-1)^{r-\mathrm{ll}(\rho)}a_{\lambda\setminus\rho,h-r}\bigg]\binom{n-r}{k-h},\] where the latter summands show up only for $h \geq r$. This yields, in particular, another proof of Proposition \ref{polgen} for this case, and $b_{\lambda,h}^{(r)}$ is the expression inside the parentheses.

We thus need to check when do contributions to the case $k=h$ exist. As the sum in Theorem \ref{MNrule} goes up to $\ell(\lambda)$, the first summand is 1 when $\lambda=(1^{k})$ and vanishes otherwise. Similarly, as $\lambda\setminus\rho \vdash k-r$, the contribution for $h=k$ from the associated with $\rho$ is $(-1)^{r-\mathrm{ll}(\rho)}$ in case $\lambda\setminus\rho=(1^{k-r})$, and is 0 in any other case. Therefore the only possible partitions $\lambda \vdash k$ for which $b_{\lambda,k}^{(r)}\neq0$ are $(1^{k})$ and those obtained by adding a skew-hook of length $r$ to $(1^{k-r})$.

Now, if $\lambda=(1^{k})$ and $k<r$ then $\lambda\in\Gamma^{r}_{k}$ by the first case of Definition \ref{rprim}. When $k \geq r$, we also have the element $\rho\in\operatorname{SH}_{r}^{\lambda}$ consisting of the boxes in the last $r$ rows, for which we have $\mathrm{ll}(\rho)=r-1$ and thus its contribution cancels with the one from the first term (and this is clearly the only element of $\operatorname{SH}_{r}^{\lambda}$). It therefore remains to show that given a partition $(1^{k})\neq\lambda \vdash k$, there exists an element $\rho\in\operatorname{SH}_{r}^{\lambda}$ for which $\lambda\setminus\rho=(1^{k-r})$ if and only if $\lambda\in\Gamma^{r}_{k}$ via one of the other two types, and in that case $(-1)^{r-\mathrm{ll}(\rho)}=\varepsilon^{r}_{\lambda}$ (since $\rho$ is then clearly the only element of $\operatorname{SH}_{r}^{\lambda}$ for which $\lambda\setminus\rho=(1^{k-r})$ and we get a contribution to $b_{\lambda,k}^{(r)}$).

Now, the fact that $\lambda\neq(1^{k})$ implies that $(1,2)\in\lambda$, and as $\lambda\setminus\rho=(1^{k-r})$ we deduce that $(1,2)$ must be in $\rho$ as well. But as $\rho$ is a skew-hook in $\lambda$, we deduce that $(2,3)\not\in\lambda$, and that $\rho$ contains the rest of the first row of $\lambda$ except for $(1,1)$, and all the boxes in the second column of $\lambda$. The fact that $(2,3)\not\in\lambda$ implies that $\lambda_{2}\leq2$, so that if we denote by $u\geq0$ the number of rows in $\lambda$, below the first one, that are of length 2, and by $v\geq0$ the number of those that are of length 1, then $\lambda=(\lambda_{1},2^{u},1^{v})$ (in fact, in this case $2u+v \leq k-2$ and $\lambda_{1}=k-2u-v\geq2$, but we shall not use this value).

We now distinguish among two situations:
\begin{enumerate}
\item $\rho$ passes through the first column of $\lambda$ as well. Then the picture looks like \[\begin{ytableau} ~ & *(cyan) & *(cyan) & *(cyan) \cdots & *(cyan) \\ ~ & *(cyan) \\ \raisebox{-0.3ex}{\vdots} & *(cyan) \raisebox{-0.3ex}{\vdots} \\ ~ & *(cyan) \\ *(cyan) & *(cyan) \\ *(cyan) \raisebox{-0.3ex}{\vdots} \\ *(cyan) \\ \end{ytableau},\] with the white boxes representing $\lambda\setminus\rho=(1^{k-r})$. Then $\rho$ contains $v+u+1$ boxes away from the first row and at least the box $(1,2)$, and as it is of size $r$, it contains $r-v-u-1\geq1$ boxes in the first row and we get $u+v \leq r-2$. Therefore $\lambda=(r-u-v,2^{u},1^{v})\in\Gamma^{r}_{k}$ via the second case of Definition \ref{rprim}, and as $\mathrm{ll}(\rho)=u+v$ we deduce that $(-1)^{r-\mathrm{ll}(\rho)}=(-1)^{r-u-v}=\varepsilon^{r}_{\lambda}$ as desired.
\item $\rho$ does not pass through the first column of $\lambda$. The picture here is \[\begin{ytableau} ~ & *(cyan)& *(cyan) & *(cyan)\cdots & *(cyan) \\ ~ & *(cyan) \\ \raisebox{-0.3ex}{\vdots} & *(cyan) \raisebox{-0.3ex}{\vdots} \\ ~ & *(cyan) \\ ~ \\ \raisebox{-0.3ex}{\vdots} \\ ~ \\ \end{ytableau},\] again with $\lambda\setminus\rho=(1^{k-r})$ being described by the set of white boxes. Then it is clear that $\rho$ contains $r-u\geq1$ boxes in the first row, and thus $\lambda=(r+1-u,2^{u},1^{v})$ with every $0 \leq u \leq r-1$ and $v\geq0$ being possible, so that it is in $\Gamma^{r}_{k}$ by the third case of Definition \ref{rprim}. The fact that $\mathrm{ll}(\rho)=u$ here implies that $(-1)^{r-\mathrm{ll}(\rho)}=(-1)^{r-u}=\varepsilon^{r}_{\lambda}$ as asserted.
\end{enumerate}
In total, when $\lambda\in\Gamma^{r}_{k}$ we get $b_{\lambda,k}^{(r)}=\varepsilon^{r}_{\lambda}$, and otherwise $b_{\lambda,k}^{(r)}=0$. This proves the lemma.
\end{proof}
One easily sees that the sum over $h$ in the proof of Lemma \ref{genbasis} goes up to $\min\{k,\ell(\lambda)+r\}$, and indeed all the $r$-primary partitions from Definition \ref{rprim} have length larger than $k-r$. But as in general some partitions can have such longer length without being $r$-primary (like $(3,3,1^{h-6})$ for $r=4$), we need the precise set from the latter definition.

\smallskip

We may now state our result for general cycles.
\begin{thm}
Let $\lambda \vdash k$ be a partition, fix $r\geq2$, take $n\geq\max\{k+\lambda_{1},r\}$, and let $\sigma \in S_{n}$ be an $r$-cycle. Then when we express the value $\chi^{(n-k,\lambda)}(\sigma)$ of the character of the representation associated with the representation corresponding to the partition $(n-k,\lambda) \vdash n$ at $\sigma$ via Equation (\ref{expgen}), the value of the coefficient $b_{\lambda,h}^{(r)}$ for any $0 \leq h\leq\min\{k,\ell(\lambda)+r\}$ is given by the combination $\sum_{\nu\in\Gamma^{r}_{h}}\varepsilon^{r}_{\nu}f^{\lambda\setminus\nu}$ of the expressions from Definition \ref{skewtabs} multiplied by the signs from Definition \ref{rprim}. \label{main}
\end{thm}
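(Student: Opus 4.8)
The plan is to prove Theorem~\ref{main} by induction on $k$, in exact parallel to the proof of Theorem~\ref{transthm}: the genuinely new input is the base case $h=k$, which has already been supplied by Lemma~\ref{genbasis}, and the inductive step for $h<k$ is forced by the branching-type recursions of Corollary~\ref{coeffind} and Lemma~\ref{recskew}. It is convenient to prove the identity $b_{\lambda,h}^{(r)}=\sum_{\nu\in\Gamma^{r}_{h}}\varepsilon^{r}_{\nu}f^{\lambda\setminus\nu}$ for \emph{all} $0\leq h\leq k$ rather than only for $h\leq\ell(\lambda)+r$; for $\ell(\lambda)+r<h\leq k$ both sides vanish, the left one because the Murnaghan--Nakayama expansion from the proof of Lemma~\ref{genbasis} writes $b_{\lambda,h}^{(r)}=a_{\lambda,h}+\sum_{\rho\in\operatorname{SH}_{r}^{\lambda}}(-1)^{r-\mathrm{ll}(\rho)}a_{\lambda\setminus\rho,h-r}$ with $a_{\mu,j}=0$ once $j>\ell(\mu)$ and $\ell(\lambda\setminus\rho)\leq\ell(\lambda)$, and the right one because each of the three families in Definition~\ref{rprim} consists of partitions $\nu\vdash h$ with $\ell(\nu)>\ell(\lambda)$ whenever $h>\ell(\lambda)+r$, so $\nu\not\subseteq\lambda$ and $f^{\lambda\setminus\nu}=0$. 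Dropping the restriction makes the induction hypothesis apply verbatim to the partitions $\lambda-v$ appearing below.

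First I would clear the easy ranges of $h$. If $h>k$ then the expansion in Equation~(\ref{expgen}) already forces $b_{\lambda,h}^{(r)}=0$, while no $\nu\vdash h$ embeds in $\lambda\vdash k$, so $\sum_{\nu\in\Gamma^{r}_{h}}\varepsilon^{r}_{\nu}f^{\lambda\setminus\nu}=0$ too by Definitions~\ref{skewtabs} and~\ref{rprim}. If $h=k$, then any $\nu\in\Gamma^{r}_{k}$ with $f^{\lambda\setminus\nu}\neq0$ must have $\nu\subseteq\lambda$, and since $|\nu|=k=|\lambda|$ this forces $\nu=\lambda$; hence $\sum_{\nu\in\Gamma^{r}_{k}}\varepsilon^{r}_{\nu}f^{\lambda\setminus\nu}$ equals $\varepsilon^{r}_{\lambda}$ if $\lambda$ is $r$-primary and $0$ otherwise, which is exactly the value of $b_{\lambda,k}^{(r)}$ furnished by Lemma~\ref{genbasis}. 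This is the base case of the induction.

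For the inductive step I would fix $h<k$, assume the identity for all partitions of $k-1$, and note that $\lambda$ is then nonempty. Corollary~\ref{coeffind}, applied with $\sigma$ our $r$-cycle and $s=r$ (so that $c_{\lambda,h}^{\sigma,r}=b_{\lambda,h}^{(r)}$), gives $b_{\lambda,h}^{(r)}=\sum_{v\in\operatorname{IC}(\lambda)}b_{\lambda-v,h}^{(r)}$. Each $\lambda-v$ is a partition of $k-1\geq h$, so the induction hypothesis turns this into $b_{\lambda,h}^{(r)}=\sum_{v\in\operatorname{IC}(\lambda)}\sum_{\nu\in\Gamma^{r}_{h}}\varepsilon^{r}_{\nu}f^{(\lambda-v)\setminus\nu}$; interchanging the two sums and using $|\lambda|=k>h=|\nu|$, hence $\lambda\neq\nu$, Lemma~\ref{recskew} collapses the inner sum to $\sum_{v\in\operatorname{IC}(\lambda)}f^{(\lambda-v)\setminus\nu}=f^{\lambda\setminus\nu}$, and we obtain $b_{\lambda,h}^{(r)}=\sum_{\nu\in\Gamma^{r}_{h}}\varepsilon^{r}_{\nu}f^{\lambda\setminus\nu}$, as required.

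I do not expect a serious obstacle here: the entire weight of the result sits in Lemma~\ref{genbasis}, where the Murnaghan--Nakayama Rule (Theorem~\ref{MNrule}) matches length-$r$ skew-hook removals leaving a single column against the $r$-primary partitions and identifies the leg-length signs with the $r$-signs. Given that lemma, Theorem~\ref{main} is a short induction identical in shape to Theorem~\ref{transthm}. The only subtlety to keep track of is the one flagged in the first paragraph: the induction must be run for the full range $0\leq h\leq k$, which requires the brief vanishing argument for $h>\ell(\lambda)+r$ so that the hypothesis really covers the smaller partitions $\lambda-v$.
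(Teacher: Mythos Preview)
Your proposal is correct and follows essentially the same route as the paper's proof: fix $h$, induct on $k$, use Lemma~\ref{genbasis} for the base case $k=h$, and combine Corollary~\ref{coeffind} with Lemma~\ref{recskew} for the inductive step $k>h$. Your explicit remark that the induction must be run over the full range $0\leq h\leq k$ (with the accompanying vanishing check for $h>\ell(\lambda)+r$) is a point the paper leaves implicit, but the argument is otherwise identical.
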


\begin{proof}
As in the proof of Theorem \ref{transthm}, we fix $h\geq0$ and argue by induction on $k$, for all partitions $\lambda \vdash k$. When $k<h$ we have $b_{\lambda,h}^{(r)}=0$ in Equation (\ref{expgen}) for every $\lambda \vdash k$, and as such $\lambda$ cannot contain any $\nu \vdash h$ when $k<h$, we have $f^{\lambda\setminus\nu}=0$ for every $\nu\in\Gamma^{r}_{h}$.

Assume now that $k=h$, where it is clear that if $\nu\subseteq\lambda$ for $\nu \vdash h$ then $\lambda=\nu$. The expression $\sum_{\nu\in\Gamma^{r}_{h}}\varepsilon^{r}_{\nu}f^{\lambda\setminus\nu}$ therefore reduces to $\varepsilon^{r}_{\lambda}$ when $\lambda\in\Gamma^{r}_{h}$, and to 0 otherwise. As Lemma \ref{genbasis} shows that $b_{\lambda,h}^{(r)}=b_{\lambda,k}^{(r)}$ attains the same value, the result holds in this case as well.

We therefore assume that $k>h$, and that the result holds for every partition of $k-1$. We can thus apply Corollary \ref{coeffind} (with our $\sigma$ and with $s=r$), followed by the induction hypothesis applied to $\lambda-v \vdash k-1$ for every $v\in\operatorname{IC}(\lambda)$, and then, after interchanging the summation order, the fact that $\lambda \vdash k$ cannot equal any $\nu\in\Gamma^{r}_{h}$ allows us to invoke Lemma \ref{recskew}. In total we get \[b_{\lambda-v,h}^{(r)}=\sum_{v\in\operatorname{IC}(\lambda)}b_{\lambda-v,h}^{(r)}=\sum_{v\in\operatorname{IC}(\lambda)}\sum_{\nu\in\Gamma^{r}_{h}}\varepsilon^{r}_{\nu} f^{(\lambda-v)\setminus\nu}=\sum_{\nu\in\Gamma^{r}_{h}}\varepsilon^{r}_{\nu}\sum_{v\in\operatorname{IC}(\lambda)}f^{(\lambda-v)\setminus\nu}= \sum_{\nu\in\Gamma^{r}_{h}}\varepsilon^{r}_{\nu}f^{\lambda\setminus\nu},\] and the result holds for our $\lambda$ as well. This completes the proof of the theorem.
\end{proof}
Note that for $r=2$ the 2-primary partitions are $\emptyset$ and $(1)$ with a positive sign arising from the first type of Definition \ref{rprim}, $(2)$ from the second type there (with $u=v=0$) also with a positive sign, and the third type produces $(3,1^{v})$ with $u=0$ and $(2,2,1^{v})$ with $u=1$, with a positive and negative sign respectively. Hence if $0 \leq h\leq3$ then $\Gamma^{2}_{h}$ contains the single partition $(h)$ with a positive sign, and when $h\geq4$ this set has two elements (as predicted by Remark \ref{sizeGamma}). Comparing these signs with Definition \ref{vals2} shows that Theorem \ref{main} for $r=2$ yields Theorem \ref{transthm}.

\begin{rmk}
In fact, we did not use the assumption that $r\geq2$ at all in our arguments. When we take $r=1$ in Definition \ref{rprim}, we get $\Gamma^{1}_{0}=\{\emptyset\}$ and $\varepsilon^{1}_{\emptyset}=+$ (like for any $r$), $\Gamma^{1}_{1}=\emptyset$ (since there are no $u\geq0$ and $v\geq0$ summing to $r-2=-1$), and $\Gamma^{1}_{h}=\{(2,1^{h-2}\}$ for any $h\geq2$ via the third case there (with $u=0$), yielding the sign $-1$. Theorem \ref{main} thus implies that $\chi^{(n-k,\lambda)}(\sigma)=f^{(n-k,\lambda)}$ is given by $f^{\lambda}\binom{n-1}{k}-\sum_{h=2}^{k}(-1)^{h}f^{\lambda\setminus(2,1^{h-2})}\binom{n-1}{k-h}$. \label{r1alt}
\end{rmk}
Remark \ref{r1alt} generalizes the fact that $f^{(n-k,1^{k})}=\binom{n-1}{k}$, since $(1^{k})$ does not contain $(2,1^{h-2})$ for any $h\geq2$. It differs from Theorem \ref{dimform} since one uses $s=0$ in Corollary \ref{coeffind}, and the other is with $s=1$. In fact, we obtain the following consequence.
\begin{cor}
For every $\lambda \vdash k$ and every $2 \leq h\leq\ell(\lambda)$ we have $f^{\lambda\setminus(1^{h-1})}-f^{\lambda\setminus(1^{h})}=f^{\lambda\setminus(2,1^{h-2})}$. \label{relr1}
\end{cor}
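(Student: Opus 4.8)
The plan is to compare the two polynomial expansions of $f^{(n-k,\lambda)}$ that are now at our disposal. The first is the one from Theorem \ref{dimform} with $s=0$, in which the coefficient of $(-1)^{h}\binom{n}{k-h}$ is $a_{\lambda,h}=f^{\lambda\setminus(1^{h})}$ (this identification was noted right after Definition \ref{skewtabs}). The second is the one from Remark \ref{r1alt} with $s=1$, namely $f^{(n-k,\lambda)}=f^{\lambda}\binom{n-1}{k}-\sum_{h=2}^{k}(-1)^{h}f^{\lambda\setminus(2,1^{h-2})}\binom{n-1}{k-h}$. Both expressions are valid for all $n\geq k+\lambda_{1}$, hence they agree as polynomials in $n$, and the idea is simply to write both in the shifted binomial basis $\{\binom{n-1}{k-h}\}_{h=0}^{k}$ and read off the coefficients.

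First I would rewrite the expansion from Theorem \ref{dimform} in that shifted basis by applying Pascal's identity $\binom{n}{m}=\binom{n-1}{m}+\binom{n-1}{m-1}$ to each term and reindexing. Collecting the coefficient of $\binom{n-1}{k-h}$, one contribution comes from $\binom{n}{k-h}$, giving $(-1)^{h}f^{\lambda\setminus(1^{h})}$, and one from $\binom{n}{k-h+1}$, giving $(-1)^{h-1}f^{\lambda\setminus(1^{h-1})}$. This produces $f^{(n-k,\lambda)}=\sum_{h\geq0}(-1)^{h}\bigl[f^{\lambda\setminus(1^{h})}-f^{\lambda\setminus(1^{h-1})}\bigr]\binom{n-1}{k-h}$, with the convention that the $h=0$ term is just $f^{\lambda}\binom{n-1}{k}$.

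Then I would equate the coefficients of $\binom{n-1}{k-h}$ in this expansion with those in the expansion from Remark \ref{r1alt}, which is legitimate because $\{\binom{n-1}{k-h}\}_{h=0}^{k}$ is a basis for the space of polynomials of degree at most $k$ (exactly as used in the proof of Corollary \ref{coeffind}). For $2\leq h\leq k$ this gives $(-1)^{h}\bigl[f^{\lambda\setminus(1^{h})}-f^{\lambda\setminus(1^{h-1})}\bigr]=-(-1)^{h}f^{\lambda\setminus(2,1^{h-2})}$, and multiplying through by $(-1)^{h}$ yields precisely $f^{\lambda\setminus(1^{h-1})}-f^{\lambda\setminus(1^{h})}=f^{\lambda\setminus(2,1^{h-2})}$. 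Since a partition of $k$ has at most $k$ parts, the stated range $2\leq h\leq\ell(\lambda)$ is contained in the range $2\leq h\leq k$ where this holds.

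I do not expect any genuine obstacle: the content of the corollary is essentially that the two bases $\{\binom{x}{m}\}$ and $\{\binom{x-1}{m}\}$ record the same polynomial. The only points requiring care are the sign bookkeeping in the reindexing step, and a sanity check that the $h=0$ and $h=1$ coefficients of the two expansions agree — they give $f^{\lambda}=f^{\lambda}$ and $0=0$ respectively, the latter using $f^{\lambda\setminus(1)}=f^{\lambda}$ (the box $(1,1)$ always carries the entry $1$) — although these two cases are not needed for the conclusion.
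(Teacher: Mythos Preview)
Your proposal is correct and follows essentially the same route as the paper's own proof: both apply Pascal's identity to rewrite the expansion from Theorem \ref{dimform} in the shifted basis $\{\binom{n-1}{k-h}\}$, then compare term by term with the expansion from Remark \ref{r1alt}. Your presentation is slightly more explicit about the basis argument and the sanity checks at $h=0,1$, but the mathematical content is the same.
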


\begin{proof}
Write $f^{(n-k,\lambda)}$ as $\sum_{h=0}^{k}(-1)^{h}f^{\lambda\setminus(1^{h})}\binom{n}{k-h}$ via Theorem \ref{dimform} and the interpretation of the coefficients there via Definition \ref{skewtabs}. We can now write $\binom{n}{k-h}$ as $\binom{n-1}{k-h}+\binom{n-1}{k-1-h}$ (also for $h=k$ when $\lambda=(1^{k})$, with the second term vanishing there), and after replacing $h$ by $h-1$ in the second term of each summand, the result is $f^{\lambda}\binom{n-1}{k}+\sum_{h=1}^{k}(-1)^{h}[f^{\lambda\setminus(1^{h})}-f^{\lambda\setminus(1^{h-1})}]\binom{n-1}{k-h}$. Comparing the terms with $h\geq2$ with those appearing in Remark \ref{r1alt} and noting the opposite sign yields the desired result. This proves the corollary.
\end{proof}
In the extreme case $\lambda=(1^{k})$, Corollary \ref{relr1} reduces to the equality $1-1=0$. The vanishing of the term associated with $h=1$ in Remark \ref{r1alt} corresponds, via the proof of that corollary, to the equality $a_{\lambda,1}=a_{\lambda,0}=f^{\lambda}$ from Proposition 3.1 of \cite{[CZ]} (or the vanishing in Theorem D of \cite{[Ra]}). In terms of $f^{\lambda\setminus(1^{h-1})}$ as counting elements of $\operatorname{SYT}(\lambda)$ where the first $h-1$ numbers are at the beginning of the first $h-1$ rows, Corollary \ref{relr1} is visible in the fact that $h$ can be located either at the beginning of the $h$th row (yielding an element of $f^{\lambda\setminus(1^{h})}$), or after 1 in the first row (producing an element of $f^{\lambda\setminus(2,1^{h-2})}$).

\smallskip

We now fix $\lambda \vdash k$, and consider the behavior of the expression from Theorem \ref{main} as $r$ varies. We therefore denote by $\sigma_{r}$ a cycle of length $r$ for any $r\geq2$ (so that $\tau=\sigma_{2}$).
\begin{ex}
Consider the partition $\lambda=(3,3)$. Theorems \ref{dimform} and \ref{main} then give
\begin{alignat*}{6}
\chi^{(n-6,3,3)}(\tau) & =5\binom{n-2}{6} & -5\binom{n-2}{5} & +3\binom{n-2}{4} & -1\binom{n-2}{3} &  & \\
\chi^{(n-6,3,3)}(\sigma_{3}) & =5\binom{n-3}{6} & -5\binom{n-3}{5} & +2\binom{n-3}{4} & -1\binom{n-3}{3} & +1\binom{n-3}{2} & -1\binom{n-3}{1} \\
\chi^{(n-6,3,3)}(\sigma_{4}) & =5\binom{n-4}{6} & -5\binom{n-4}{5} & +2\binom{n-4}{4} &  & -1\binom{n-4}{2} & +1\binom{n-4}{1} \\
\chi^{(n-6,3,3)}(\sigma_{r\geq5}) &=5\binom{n-r}{6} & -5\binom{n-r}{5} & +2\binom{n-r}{4} & & & \\
f^{(n-6,3,3)} &= 5\binom{n}{6} & -5\binom{n}{5} \qquad & +2\binom{n}{4}. \qquad & & &
\end{alignat*} \label{difrex}
\end{ex}
We can write $f^{(n-6,3,3)}$ in Example \ref{difrex} also as $5\binom{n-1}{6}-3\binom{n-1}{4}+2\binom{n-1}{3}$ via Remark \ref{r1alt}.

We now consider the limit behavior as $r\to\infty$ for the coefficients from Theorem \ref{main}.
\begin{cor}
For fixed $\lambda \vdash k$ and $0 \leq h \leq k$, the coefficients $b_{\lambda,h}^{(r)}$ in the expansion of $\chi^{(n-k,\lambda)}(\sigma_{r})$ in Equation (\ref{expgen}) stabilize as $r\to\infty$, and are equal to $a_{\lambda,h}=f^{\lambda\setminus(1^{h})}$ once $r>h$. \label{limr}
\end{cor}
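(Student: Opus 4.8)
The statement to prove is Corollary \ref{limr}: for fixed $\lambda \vdash k$ and $0 \le h \le k$, once $r > h$ the coefficient $b_{\lambda,h}^{(r)}$ equals $a_{\lambda,h} = f^{\lambda\setminus(1^{h})}$. The natural approach is to feed the hypothesis $r > h$ into the description of $\Gamma^{r}_{h}$ supplied by Definition \ref{rprim} (or the summary in Remark \ref{sizeGamma}), and then apply Theorem \ref{main}. The first step is to observe that Remark \ref{sizeGamma} already tells us that when $0 \le h < r$ we have $\Gamma^{r}_{h} = \{(1^{h})\}$, a singleton. So the set of $r$-primary partitions of $h$ does not depend on $r$ at all once $r > h$, and consists solely of the single-column partition.

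\textbf{Identifying the sign and applying the main theorem.} The second step is to check that the attached $r$-sign $\varepsilon^{r}_{(1^{h})}$ is positive: this is immediate from case (1) of Definition \ref{rprim}, since $(1^{h}) \vdash h$ with $h \le r-1$ is exactly a type-(1) partition, carrying a positive sign. Then Theorem \ref{main} gives
\[
b_{\lambda,h}^{(r)} = \sum_{\nu\in\Gamma^{r}_{h}} \varepsilon^{r}_{\nu} f^{\lambda\setminus\nu} = \varepsilon^{r}_{(1^{h})} f^{\lambda\setminus(1^{h})} = f^{\lambda\setminus(1^{h})},
\]
valid in the range $0 \le h \le \min\{k,\ell(\lambda)+r\}$; since $r > h$ forces $h \le k$ to lie in this range (and when $h > k$ both sides are zero trivially, as $\lambda$ cannot contain $(1^h)$), we are done for all $0 \le h \le k$. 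The third step is just to recall, as noted right after Definition \ref{skewtabs}, that $f^{\lambda\setminus(1^{h})}$ coincides with $a_{\lambda,h}$ from Theorem \ref{dimform} (the standard Young tableaux of $\lambda$ in which $1,\dots,h$ sit at the tops of the first $h$ rows are exactly those whose restriction to $(1^h)$ is the unique column tableau). This gives the stabilization claim, since the limiting value $f^{\lambda\setminus(1^{h})}$ is manifestly independent of $r$.

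\textbf{Main obstacle.} There is essentially no obstacle here: the content is entirely in the structural description of $\Gamma^{r}_{h}$ for $r>h$, which has already been extracted in Remark \ref{sizeGamma}, and in Theorem \ref{main}, which is already proved. The only point requiring a moment's care is the boundary case $h = k$ (so $\lambda \vdash h$), where Theorem \ref{main}'s base case reduces to Lemma \ref{genbasis}; but one checks directly that if $r > h = k$ then the only way $b_{\lambda,k}^{(r)} \ne 0$ is $\lambda = (1^{k})$, with value $\varepsilon^{r}_{(1^k)} = +1 = f^{\lambda\setminus(1^k)}$, consistent with the formula. A secondary bookkeeping remark is that ``stabilize'' should be stated as: the sequence $(b_{\lambda,h}^{(r)})_{r}$ is eventually constant in $r$, with the constant value attained already for every $r > h$; this is exactly what the displayed equality above yields since its right-hand side carries no dependence on $r$.
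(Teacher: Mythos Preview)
Your proof is correct and follows essentially the same approach as the paper: invoke Remark \ref{sizeGamma} to see that $\Gamma^{r}_{h}=\{(1^{h})\}$ with positive $r$-sign whenever $r>h$, then apply Theorem \ref{main} to obtain $b_{\lambda,h}^{(r)}=f^{\lambda\setminus(1^{h})}=a_{\lambda,h}$. The paper's own proof is just a terser version of the same argument, omitting the range and boundary checks you spell out.
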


\begin{proof}
For $h<r$ Remark \ref{sizeGamma} shows that $\Gamma^{r}_{h}$ from Definition \ref{rprim} is the singleton $\{(1^{h})\}$, with $\varepsilon^{r}_{1^{h}}=+1$. Hence the assertion follows directly from Theorem \ref{main}. This proves the corollary.
\end{proof}
Example \ref{difrex} exemplifies the behavior described in Corollary \ref{limr}.

\section{A Second Determination of the Constant Coefficients \label{AltPf}}

The proof of Lemma \ref{genbasis}, which forms the basic case for Theorem \ref{main}, was based on the Murnaghan--Nakayama Rule, as described in Theorem \ref{MNrule}. It is based on taking one cycle out of an element $\eta \in S_{n}$, and describing the character values on $\eta$ using character values of on an element with this one cycle removed.

Our results, as are those of \cite{[CZ]}, are based on taking the longest row out of $\mu$, namely $n-k$ in case $\mu=(n-k,\lambda)$ and $n \geq k+\lambda_{1}$, and describing the corresponding character values in terms of quantities that are associated with the remaining partition $\lambda$. There is a result given in terms of this point of view, which we now describe.

Recall that if $\lambda$ contains another partition $\kappa$ then we call $\lambda\setminus\kappa$ a \emph{vertical strip} if it occupies at most one box in each row. We denote the set of vertical strips thus obtained inside $\lambda$ by $\operatorname{VS}_{\lambda}$. For some partition $\alpha \vdash n$, we denote by $a_{i}$ the multiplicity of $i$ in $\alpha$, so that $\alpha$ is the appropriate re-ordering of $(1^{a_{1}},2^{a_{2}},\ldots,n^{a_{n}})$ (with trivial powers $i^{0}$ removed, of course).

Given a permutation $\sigma \in S_{n}$, we write $x_{i}(\sigma)$ for the number of cycles in $\sigma$ that are of length $i$, so that in particular $x_{1}(\sigma)$ is the number of fixed points of $\sigma$, and we have $\sum_{i=1}^{n}ix_{i}(\sigma)=n$. Given $\alpha \vdash n$ with $\{a_{i}\}_{i=1}^{n}$ as above and variables $\{x_{i}\}_{i=1}^{n}$, we write $\binom{x}{\alpha}$ for the product $\prod_{i=1}^{n}\binom{x_{i}}{a_{i}}$, and we denote by $\pi_{\alpha}$ some permutation in $S_{n}$ with cycle structure that is described by $\alpha$.

We now cite a result from in Section 2.9 of \cite{[P]}, or Equation (5) on page 124 of 
\cite{[M]} (the latter after substituting $N=n-k$ and evaluating at an element of $S_{n}$).
\begin{prop}
Take a partition $\lambda \vdash k$ and $\sigma \in S_{n}$ for some $n \geq k+\lambda_{1}$. Then we have \[\chi^{(n-k,\lambda)}(\sigma)=\sum_{\substack{\kappa\subseteq\lambda \\ \lambda\setminus\kappa\in\operatorname{VS}_{\lambda}}}(-1)^{|\lambda\setminus\kappa|}\sum_{\alpha\vdash|\kappa|}\chi^{\kappa}(\pi_{\alpha})\binom{x(\sigma)}{\alpha}.\] \label{recpart}
\end{prop}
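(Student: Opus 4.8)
The statement to prove is Proposition \ref{recpart}, which expresses $\chi^{(n-k,\lambda)}(\sigma)$ as a double sum over vertical strips $\lambda\setminus\kappa$ and partitions $\alpha\vdash|\kappa|$. This is a citation from the literature (Macdonald, or Pragacz), so the cleanest route is to derive it from tools already established in the excerpt rather than reproduce the original source's derivation.

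The plan is to combine the Jacobi--Trudi / Murnaghan--Nakayama machinery already available with the special structure of the ``almost rectangular'' shape $(n-k,\lambda)$. First I would recall that the irreducible character $\chi^{\mu}$ evaluated on a permutation of cycle type $\beta$ is, via the characteristic map, the coefficient of the Schur function $s_{\mu}$ in the power-sum expansion; equivalently $s_{(n-k,\lambda)} = \sum_{\beta} z_{\beta}^{-1}\chi^{(n-k,\lambda)}(\pi_{\beta}) p_{\beta}$. The key combinatorial input is the expansion of $s_{(n-k,\lambda)}$ in terms of complete homogeneous symmetric functions when the first part $n-k$ is large: writing $s_{(n-k,\lambda)}$ via the dual Jacobi--Trudi (Nägelsbach--Kostka) or via the branching/Pieri rule, one peels off the long first row. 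Concretely, $s_{(n-k,\lambda)} = \sum_{\kappa} (-1)^{|\lambda\setminus\kappa|} s_{\kappa} \cdot (\text{something in } h\text{'s indexed by } n-k)$, where $\kappa$ ranges over partitions with $\lambda\setminus\kappa$ a vertical strip — this is exactly the content of the expansion $s_{(n-k,\lambda)} = \sum_{\kappa:\,\lambda/\kappa\ \mathrm{vert.\ strip}} (-1)^{|\lambda/\kappa|}\, e_{\,?}\cdots$; the vertical-strip condition and the sign $(-1)^{|\lambda\setminus\kappa|}$ are the signature of an $e$-expansion coming from the determinantal identity. I would make this precise and identify the ``tail'' factor.

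Next I would translate into character values. Applying the characteristic map (or directly using Theorem \ref{MNrule} iteratively to remove the long cycle structure living on the $n-k$ part), the factor attached to the first row, when evaluated at $\sigma\in S_n$, becomes a sum over how the cycles of $\sigma$ are distributed; this is precisely where $\binom{x(\sigma)}{\alpha} = \prod_i \binom{x_i(\sigma)}{a_i}$ enters — it counts the ways to choose $a_i$ cycles of length $i$ from the $x_i(\sigma)$ available to assign to the "big" rectangular block, the remaining cycles forming $\pi_\alpha$ acting against $\chi^\kappa$. I would spell out that the generating-function identity $\sum_{n} h_n[\text{stuff}]$ evaluated on power sums produces exactly the binomial coefficients $\binom{x_i}{a_i}$, since $h_n$ of a single variable repeated produces $\binom{\cdot}{\cdot}$ after the plethystic substitution. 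Assembling: the outer sum over $\kappa$ with its sign, and the inner sum over $\alpha\vdash|\kappa|$ of $\chi^{\kappa}(\pi_\alpha)\binom{x(\sigma)}{\alpha}$, is exactly the claimed formula. Finally I would check the hypothesis $n\ge k+\lambda_1$ is what guarantees the vertical-strip truncation is the full expansion (no shorter-than-$\lambda_1$ first part appears), and that the formula is consistent with Theorem \ref{dimform} and Corollary \ref{coeffind} by specializing $\sigma$ to the identity, where $\binom{x(\sigma)}{\alpha}$ forces $\alpha=(1^{|\kappa|})$ and recovers $f^\kappa$ weighted by $\binom{n-k}{|\lambda\setminus\kappa|}$-type counts.

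The main obstacle I expect is bookkeeping the plethystic/combinatorial identification of the "tail" factor: showing rigorously that the long-first-row contribution, after evaluation on $\sigma$, produces precisely $\binom{x(\sigma)}{\alpha}$ summed against the correct cycle type, rather than some other statistic. The vertical-strip condition and the sign are easy to pin down from Jacobi--Trudi, and the branching/recursion structure is already packaged in Theorem \ref{MNrule} and Corollary \ref{coeffind}, but making the binomial-coefficient weight fall out cleanly — essentially the statement that choosing which cycles go into a "horizontal" block is a product of binomials — is the step that requires care. An alternative, and probably shorter, route that sidesteps symmetric-function technology is a direct induction using Theorem \ref{MNrule}: remove cycles one at a time, track how each removal either shortens $\lambda$ (producing the $\chi^\kappa$ factors and vertical-strip signs from leg lengths) or is absorbed by the long first row (producing a factor of $x_i(\sigma)$ choices), and verify the binomial coefficients accumulate as $\binom{x_i}{a_i}$; the hypothesis $n\ge k+\lambda_1$ ensures that removing an $i$-cycle from the first row never interacts with $\lambda$. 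I would likely present this inductive argument as the main proof, since it stays within the toolkit of the paper.
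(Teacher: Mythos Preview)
The paper does not prove Proposition~\ref{recpart}; it is explicitly quoted from the literature (Section~2.9 of \cite{[P]} and Equation~(5) on p.~124 of \cite{[M]}), so there is no in-paper argument to compare against. Your proposal therefore goes beyond what the paper does. A small slip: the reference \cite{[P]} is Prasad, not Pragacz.

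Your first route, via the Frobenius characteristic and the Jacobi--Trudi/Pieri machinery, is essentially the argument behind the cited sources. The vertical-strip sum with sign $(-1)^{|\lambda\setminus\kappa|}$ is exactly what one gets from the identity $\sum_{N\geq0}s_{(N,\lambda)}=\big(\sum_{N}h_{N}\big)\cdot\sum_{\kappa:\,\lambda/\kappa\in\mathrm{VS}}(-1)^{|\lambda/\kappa|}s_{\kappa}$ (equivalently, from expanding the first row of the Jacobi--Trudi determinant), and after applying the characteristic map the factor $\sum_{N}h_{N}$ evaluated on a permutation with cycle multiplicities $x_{i}$ produces precisely the products $\prod_{i}\binom{x_{i}}{a_{i}}$. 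Your sketch is correct in outline, though none of this symmetric-function apparatus is set up in the paper, so it is not really ``within the toolkit'' as you suggest.

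Your second route, a direct induction via Theorem~\ref{MNrule}, has a genuine gap. Removing an $r$-cycle via Murnaghan--Nakayama does split nicely into ``absorbed by the long first row'' versus ``hits $\lambda$'' when $n$ is large, and Pascal's identity on $\binom{x_{r}}{a_{r}}$ handles the first case. But the second case produces a sum over skew-hooks $\rho\in\operatorname{SH}_{r}^{\lambda}$, whereas the target formula is organised by \emph{vertical strips} $\lambda\setminus\kappa$. Matching these requires the combinatorial identity
\[
\sum_{\substack{\kappa:\,\lambda/\kappa\in\mathrm{VS}\\ \rho'\in\operatorname{SH}_{r}^{\kappa}}}(-1)^{|\lambda/\kappa|+\mathrm{ll}(\rho')}[\,\cdot\,]_{\kappa\setminus\rho'}
\;=\;
\sum_{\substack{\rho\in\operatorname{SH}_{r}^{\lambda}\\ \kappa'':\,(\lambda\setminus\rho)/\kappa''\in\mathrm{VS}}}(-1)^{\mathrm{ll}(\rho)+|(\lambda\setminus\rho)/\kappa''|}[\,\cdot\,]_{\kappa''},
\]
i.e.\ a sign-preserving bijection (up to cancellation) between pairs (vertical strip, then skew-hook) and pairs (skew-hook, then vertical strip). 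This commutation is not automatic and is not addressed in your sketch; without it the induction does not close. If you want to pursue this route you would need to supply that lemma explicitly.
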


In fact, Proposition \ref{recpart} yields another proof of Proposition \ref{polgen} and suggests a particular value for $s$ in Corollary \ref{coeffind}.
\begin{cor}
Let $\sigma$ be fixed as in Proposition \ref{polgen}. Then $\chi^{(n-k,\lambda)}(\sigma)$ is a the value at $n$ of a polynomial of degree $k$, which is given as linear combinations of $\big\{\binom{x-s}{k-h}\big\}_{h=0}^{k}$ for $s:=\sum_{i\geq2}ix_{i}(\sigma)$, and where the coefficient multiplying $\binom{x-s}{k}$ is $f^{\lambda}>0$. \label{specs}
\end{cor}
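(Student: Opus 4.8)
The plan is to read off both claims---the degree $k$, the expansion in the translated binomial basis $\bigl\{\binom{x-s}{k-h}\bigr\}_{h=0}^k$, and the leading coefficient $f^{\lambda}$---directly from the formula in Proposition \ref{recpart}, by analyzing how the quantities $\binom{x(\sigma)}{\alpha}$ depend on $n$ when $\sigma$ is held fixed and $n$ grows. First I would observe that as $n$ increases (with $\sigma$ viewed inside each $S_n$ via the standard embedding), the only cycle count that changes is the number of fixed points: $x_1(\sigma)$ grows by $1$ each time $n$ does, while $x_i(\sigma)$ for $i\geq 2$ stays constant. Writing $m$ for the order of the symmetric group in which $\sigma$ originally lives, we have $x_1(\sigma)=n-\sum_{i\geq 2} i\,x_i(\sigma)=n-s$ once $n\geq m$, with $s:=\sum_{i\geq 2} i\,x_i(\sigma)$ exactly the quantity named in the statement. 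Hence each factor $\binom{x_1(\sigma)}{a_1}=\binom{n-s}{a_1}$ is (the value at $n$ of) a polynomial of degree $a_1$ in the variable $x=n$, written naturally in the translated binomial basis, while every factor $\binom{x_i(\sigma)}{a_i}$ for $i\geq 2$ is a fixed rational constant independent of $n$.

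Next I would plug this into Proposition \ref{recpart}. For a fixed vertical strip $\lambda\setminus\kappa$ and a fixed $\alpha\vdash|\kappa|$, the term $(-1)^{|\lambda\setminus\kappa|}\chi^{\kappa}(\pi_\alpha)\binom{x(\sigma)}{\alpha}$ becomes a constant (depending on $\kappa,\alpha,\sigma$) times $\binom{x-s}{a_1}$, hence a polynomial in $n$ of degree exactly $a_1$, the multiplicity of $1$ in $\alpha$. Summing a finite collection of such terms shows $\chi^{(n-k,\lambda)}(\sigma)$ is a polynomial in $n$, expressible as a linear combination of $\binom{x-s}{j}$ for various $j\geq 0$; this already reproves Proposition \ref{polgen} from this second viewpoint. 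To pin down the degree and leading coefficient, I would single out which terms can contribute $\binom{x-s}{k}$, the top one we expect. The largest possible value of $a_1$ over all $\alpha\vdash|\kappa|$ is $|\kappa|$, attained only by $\alpha=(1^{|\kappa|})$, i.e.\ $\pi_\alpha$ trivial; and $|\kappa|\leq k$ with equality only when $\kappa=\lambda$ (so $\lambda\setminus\kappa$ is the empty strip, contributing sign $(-1)^0=1$). Thus the unique source of a $\binom{x-s}{k}$ term is $\kappa=\lambda$, $\alpha=(1^{k})$, whose coefficient is $\chi^{\lambda}(\mathrm{id})=f^{\lambda}>0$. Every other term has $a_1<k$, hence contributes only to $\binom{x-s}{j}$ with $j<k$. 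This gives the degree-$k$ claim, the stated basis, and the leading coefficient $f^{\lambda}$, matching the indexing $\sum_{h=0}^k(\pm)c_{\lambda,h}^{\sigma,s}\binom{x-s}{k-h}$ of Corollary \ref{coeffind} with $h=0$ picking out $f^{\lambda}$.

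The one point requiring a small amount of care---and the closest thing to an obstacle---is the bookkeeping that no cancellation destroys the top term: since there is exactly one term of top degree $k$ and its coefficient $f^{\lambda}$ is strictly positive, there is nothing to cancel against, so the degree is genuinely $k$ and not smaller. I would also note explicitly that the hypothesis $n\geq k+\lambda_1$ from Proposition \ref{recpart} (together with $n\geq m$, which is automatic once $n\geq k+\lambda_1\geq m$ in the relevant range, or can simply be added) is what legitimizes using the formula and the identity $x_1(\sigma)=n-s$ simultaneously. Beyond that the argument is a direct extraction of leading behavior from a finite sum, so I would keep the write-up brief: establish $x_1(\sigma)=n-s$, substitute, identify the unique top-degree contributor, and conclude.
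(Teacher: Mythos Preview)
Your proposal is correct and follows essentially the same approach as the paper: observe that $x_1(\sigma)=n-s$ while the other $x_i(\sigma)$ are constants, so each summand in Proposition~\ref{recpart} is a constant times $\binom{n-s}{a_1}$, and then identify the unique top-degree term as coming from $\kappa=\lambda$, $\alpha=(1^{k})$, with coefficient $\chi^{\lambda}(\mathrm{id})=f^{\lambda}$. Your write-up is slightly more explicit about why no cancellation can kill the leading term, but the argument is the same.
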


\begin{proof}
The fact that $\sigma$ is fixed implies that so is $x_{i}(\sigma)$ for any $i\geq2$, our $s$ is a constant as well and we have $x_{1}(\sigma)=n-s$. Therefore every choice of $\kappa$ and $\alpha$ in Proposition \ref{polgen} yields some constant times $\binom{n-s}{a_{1}}$, yielding the polynomial property and the form of the expansion from Corollary \ref{coeffind}, with our $s$.

For the degree we need to maximize $a_{1}$, which for fixed $|\kappa|$ is obtained when $\alpha=1^{|\kappa|}$ and $a_{1}=|\kappa|$. As this is maximized when $\kappa=\lambda$, yielding a single term involving $\binom{n-s}{k}$, and all the others being of smaller degree. Hence the degree of our polynomial is indeed $k$, and as for $\kappa=\lambda$ and $\alpha=1^{k}$ the sign is $+$ and the element $\pi_{\alpha}$ is trivial, we get the asserted multiplier as well. This proves the corollary.
\end{proof}
In the terminology from Corollary \ref{coeffind}, Corollary \ref{specs} states that $c_{\lambda,0}^{\sigma,s}=f^{\lambda}$ for our $\sigma$ and $s$, and therefore this is the same for every choice of $s$ (as the leading coefficient of our polynomial, up to the factor $k!$). Note that when $\sigma$ is a cycle of length $r$, the value of $s$ in that proposition is indeed $r$ as in Theorem \ref{main}, so that this proposition also established Equation (\ref{expgen}), generalized to any permutation.

\smallskip

In general, Proposition \ref{recpart} does not give all the coefficients in Equation (\ref{expgen}), as even for cycles there may be several contributions to a fixed $b_{\lambda,h}^{(r)}$. However, as we saw in our proofs, once the basic case for $h$ is given in terms of a linear combination of numbers counting standard Young tableaux with fixed skew-shapes of size $h$ (as in Definition \ref{skewtabs}), it is valid for every partition using Corollary \ref{coeffind} and Lemma \ref{recskew}. We thus use this result in order to obtain the constant terms from Lemma \ref{genbasis}.

We shall be using the following consequence of the Murnaghan--Nakayama Rule.
\begin{lem}
If $\kappa \vdash r$ and $\sigma$ is a cycle of length $r$ in $S_{r}$ Then $\chi^{\kappa}(\sigma)$ equals $(-1)^{r-i}$ in case $\kappa$ is of the form $(i,1^{r-i})$ for some $1 \leq i \leq r$, and it vanishes otherwise. \label{charcyc}
\end{lem}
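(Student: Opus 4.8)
The plan is to apply the Murnaghan--Nakayama Rule from Theorem \ref{MNrule} directly to $\chi^{\kappa}(\sigma)$, where $\sigma$ is a single $r$-cycle in $S_{r}$. Since $\sigma$ consists of just one cycle (of length $r$) and no fixed points, the rule with the cycle of length $r$ being all of $\sigma$ gives $\chi^{\kappa}(\sigma)=\sum_{\rho\in\operatorname{SH}_{r}^{\kappa}}(-1)^{\mathrm{ll}(\rho)}\chi^{\kappa\setminus\rho}(\pi)$ where $\pi$ is the trivial element of $S_{0}$; but $\kappa\vdash r$, so $\kappa\setminus\rho$ is the empty partition for every $\rho\in\operatorname{SH}_{r}^{\kappa}$, and $\chi^{\emptyset}$ of the trivial element is $1$. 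Hence $\chi^{\kappa}(\sigma)=\sum_{\rho\in\operatorname{SH}_{r}^{\kappa}}(-1)^{\mathrm{ll}(\rho)}$, where the sum is over skew-hooks of size exactly $r$ inside $\kappa$.

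The key combinatorial observation is then that a partition $\kappa\vdash r$ possesses a skew-hook of size $r$ if and only if $\kappa$ itself is a skew-hook, i.e.\ $\kappa$ equals its own boundary; equivalently, $(2,2)\not\subseteq\kappa$, which forces $\kappa$ to be of the ``hook shape'' $(i,1^{r-i})$ for some $1\leq i\leq r$. Indeed, if $\rho\in\operatorname{SH}_{r}^{\kappa}$ has size $r=|\kappa|$ then $\kappa\setminus\rho=\emptyset$, so $\rho=\kappa$, meaning $\kappa$ is a connected boundary subset whose complement (the empty diagram) is a partition; this is exactly the condition that $\kappa$ be a hook. Conversely, a hook $(i,1^{r-i})$ is its own unique size-$r$ skew-hook. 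So the sum is empty (giving $0$) unless $\kappa=(i,1^{r-i})$, in which case it is a single term.

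The remaining step is to compute the leg length of the unique skew-hook $\rho=\kappa=(i,1^{r-i})$. This hook occupies $r-i+1$ rows (the first row, plus the $r-i$ singleton rows below it), so $\mathrm{ll}(\rho)=(r-i+1)-1=r-i$, and therefore $\chi^{\kappa}(\sigma)=(-1)^{r-i}$, as claimed.

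I expect no real obstacle here; the only point requiring a little care is the bookkeeping identifying ``$\kappa$ has a skew-hook of full size $r$'' with ``$\kappa$ is a hook partition,'' and making sure the degenerate cases $i=r$ (a single row, $\kappa=(r)$, leg length $0$) and $i=1$ (a single column, $\kappa=(1^{r})$, leg length $r-1$) are covered by the same formula $(-1)^{r-i}$ --- both of which they are. One could alternatively phrase this via the well-known fact that the character of an $r$-cycle is supported on hook shapes with value $(-1)^{r-i}$ on $(i,1^{r-i})$, but deriving it from Theorem \ref{MNrule} as above keeps the exposition self-contained.
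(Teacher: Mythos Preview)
Your proof is correct and follows essentially the same approach as the paper: apply Theorem \ref{MNrule} with $\mu=\kappa$ and $n=r$, observe that $\operatorname{SH}_{r}^{\kappa}$ is $\{\kappa\}$ when $\kappa$ is itself a skew-hook (i.e., a hook $(i,1^{r-i})$) and empty otherwise, and read off $\mathrm{ll}(\kappa)=r-i$. The paper's version is terser but the argument is identical.
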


\begin{proof}
We apply Theorem \ref{MNrule} with $\mu=\kappa$ and $n=r$. Then $\operatorname{SH}_{r}^{\kappa}$ equals $\{\kappa\}$ in case $\kappa$ is a skew-hook (and then the multiplier involving $\chi^{\kappa\setminus\kappa}$ is 1), and is empty otherwise. As the only case where $\kappa$ is a skew-hook is when $\kappa=(i,1^{r-i})$ for such $i$, and then $\mathrm{ll}(\kappa)=r-i$ (also when $i=1$, representing $\kappa=(1^{r})$), the assertion follows. This proves the lemma.
\end{proof}

We may now give a second proof for our result.
\begin{proof}[Second proof of Lemma \ref{genbasis}]
Contributions to the constant term in Proposition \ref{recpart} and Corollary \ref{specs} arise from summands $\alpha$ for which $a_{1}=0$. In the remaining product $\prod_{i=2}^{n}\binom{x_{i}(\sigma)}{a_{i}}$ forming $\binom{x(\sigma)}{\alpha}$, we note that as $\sigma$ is a cycle of length $r$ we have $x_{r}(\sigma)=1$ and $x_{i}(\sigma)=0$ for any other $i\geq2$. Therefore only $\alpha$ for which $a_{i}=0$ for $r \neq i\geq2$ and $a_{r}\in\{0,1\}$ contribute, and we also have $a_{1}=0$. This produces only two options, namely either $\alpha$ is empty and hence so is $\kappa$, or $\kappa \vdash r$ and $\pi_{\alpha} \in S_{r}$ is a full cycle, namely coincides with our $\sigma$ in that group.

We recall the sign $(-1)^{|\lambda\setminus\kappa|}$, as well as the sign $(-1)^{h}=(-1)^{k}$ from Equation (\ref{expgen}) with $h=k$, and the fact that $\lambda \vdash k$. Hence if $\alpha=\kappa=\emptyset$ contributes a value then this value is $+\chi^{\emptyset}=1$, and every element $\kappa\subseteq\lambda$ with $|\kappa|=r$ and $\lambda\setminus\kappa\in\operatorname{VS}_{\lambda}$ contributes a value of $(-1)^{r}\chi^{\kappa}(\sigma)$.

The case where $\alpha=\kappa=\emptyset$ appears precisely when $\lambda\in\operatorname{VS}_{\lambda}$, namely if and only that $\lambda$ is a vertical strip, namely just for $\lambda=(1^{k})$. If $k<r$ then $\lambda$ cannot contain any $\kappa$ of size $r$, and thus the total value is $+1=\varepsilon^{r}_{\lambda}$ via the first case of Definition \ref{rprim}. When $k \geq r$ we can take $\kappa=(1^{r})$, where indeed $\lambda\setminus\kappa\in\operatorname{VS}_{\lambda}$, and since $(-1)^{r}\chi^{\kappa}(\sigma)=-1$ (either via Lemma \ref{charcyc} or simply by checking the sign of $\sigma$), the two summands cancel out of $b_{\lambda,k}^{(r)}=0$ as desired.

We thus assume that $\lambda\neq(1^{k})$, and Lemma \ref{charcyc} allows us to restrict attention to those $\lambda \vdash k \geq r$ that contain $\kappa=(i,1^{r-i})$ for some $1 \leq i \leq r$ such that $\lambda\setminus\kappa\in\operatorname{VS}_{\lambda}$, and each such $\kappa$ will contribute $(-1)^{r}\chi^{\kappa}(\sigma)=(-1)^{i}$ to the constant term. Moreover, since in this situation the number $\lambda_{1}-\kappa_{1}=\lambda_{1}-i$ in the first row of the vertical strip $\lambda\setminus\kappa$ can be either 0 or 1, there can be at most two such $\kappa$'s, with consecutive values of $i$. But as when both values produce vertical strips the their contributions cancel, we only get non-zero a constant term for $\lambda\neq(1^{k})$ in case precisely one value of $1 \leq i \leq r$ yields $\kappa=(i,1^{r-i})\subseteq\lambda$ and $\lambda\setminus\kappa\in\operatorname{VS}_{\lambda}$.

We note that our $\kappa$ takes at most one box from every row below the first one, and $\lambda\setminus\kappa$ is a vertical strip, so that $\lambda$ must be of the sort $(\lambda_{1},2^{u},1^{v})$ where $u \leq r-i \leq u+v$. Moveover, the vertical strip condition implies, in the first row, that $\lambda_{1}-\kappa_{1}=\lambda_{1}-i$ can be either 0 or 1, and we consider each of these options separately.
\begin{enumerate}
\item When $\lambda_{1}-\kappa_{1}=0$, we get $\lambda_{1}=i\geq2$ (because $\lambda\neq(1^{k})$), and we claim that $u+v=r-i$. Indeed, otherwise we have $\lambda_{r-i+2}=1$ (since $\kappa_{r-i+2}=0$ and $\lambda\setminus\kappa\in\operatorname{VS}_{\lambda}$), and then putting the box in that row instead of $(1,i)$ in $\kappa$ produces another element of the form from Lemma \ref{charcyc} inside $\lambda$ with complement in $\operatorname{VS}_{\lambda}$, and we assumed that this is not the case. Thus $u+v=r-i \leq r-2$, namely $\lambda\in\Gamma^{r}_{k}$ as in the second case of Definition \ref{rprim}, and the constant term is $(-1)^{i}=(-1)^{r-u-v}=\varepsilon^{r}_{\lambda}$ as desired. Here is $\lambda$, with $\kappa$ consisting of the cyan boxes in this case. \[\begin{ytableau} *(cyan)& *(cyan)& *(cyan)\cdots & *(cyan) \\ *(cyan) & \\ *(cyan)\raisebox{-0.3ex}{\vdots} & \raisebox{-0.3ex}{\vdots} \\ *(cyan) & \\ *(cyan) \\ *(cyan)\raisebox{-0.3ex}{\vdots} \\ *(cyan) \\ \end{ytableau}\]
\item We now assume that $\lambda_{1}-\kappa_{1}=1$, so that $\lambda_{1}=i+1\geq2$, and we claim that $u=r-i$. Indeed, when this is not the case we get $\lambda_{r-i+1}=1=\kappa_{r-i+1}$, so that we can remove the box in that row from $\kappa$ and replace it by the remaining one in the first row and get an element as in Lemma \ref{charcyc} whose complement in $\lambda$ is in $\operatorname{VS}_{\lambda}$, which cannot be under our assumption. It follows that $\lambda\in\Gamma^{r}_{k}$ via the third case in Definition \ref{rprim}, with the sign appearing in the constant term being $(-1)^{i}=(-1)^{r-u}=\varepsilon^{r}_{\lambda}$ as required. The picture of $\lambda$, with $\kappa$ in cyan inside it, here look as follows. \[\begin{ytableau} *(cyan)& *(cyan)& *(cyan)\cdots & *(cyan) & \\ *(cyan) & \\ *(cyan)\raisebox{-0.3ex}{\vdots}& \\ *(cyan) & \\ \\ \raisebox{-0.3ex}{\vdots} \\ \\ \end{ytableau}\]
\end{enumerate}
We therefore showed that the only case where a non-zero constant term exists is when $\lambda\in\Gamma^{r}_{k}$, and in every such case this constant term is the sign $\varepsilon^{r}_{\lambda}$. This completes the proof of the lemma.
\end{proof}
The proof of Theorem \ref{main} in this method now continues as in the previous one.

\smallskip

We conclude by remarking about partitions that are not single cycles. The inductive construction will work in general, but the issue of determining the basic cases will be more complicated. This is visible in the first proof of Lemma \ref{genbasis}, where the application of the Murnaghan--Nakayama Rule, producing characters values of partitions with one cycle less, are no longer just the dimensions. In the second proof, via Proposition \ref{recpart} and Corollary \ref{specs}, the binomial coefficients $\prod_{i=2}^{n}\binom{x_{i}(\sigma)}{a_{i}}$ will be less restrictive, thus producing more options of $\pi_{\alpha}$ and with them more partitions $\kappa$ that allow for non-zero contributions. In fact, in the few cases that we checked, even for the simplest non-cycle permutation of cycle type $(2,2,1^{n-4})$ we encounter constant terms that are no longer sign, but can equal 2 or $-2$. Hence the analysis of other permutations is more complicated, and is left for future research.

\medskip

\noindent\textsc{Einstein Institute of Mathematics, the Hebrew University of Jerusalem, Edmund Safra Campus, Jerusalem 91904, Israel}

\noindent E-mail address: tom.moshaiov@mail.huji.ac.il

\noindent\textsc{Einstein Institute of Mathematics, the Hebrew University of Jerusalem, Edmund Safra Campus, Jerusalem 91904, Israel}

\noindent E-mail address: zemels@math.huji.ac.il


\begin{thebibliography}{}{}

\bibitem[CZ]{[CZ]} Cohen, A., Zemel, S., \textsc{Polynomial Expressions for the Dimensions of the Representations of Symmetric Groups and Restricted Standard Young Tableaux}, to appear in Eur. J. Combin., (2024).
\bibitem[S]{[S]} Sagan, B. E., \textsc{The Symmetric Group: Representations, Combinatorial Algorithms, and Symmetric Functions}, 2nd edition, Graduate Texts in Mathematics 203, Springer, New York, New York, xv+238pp (2000).
\bibitem[F]{[F]} Frobenius, G.F., \textsc{\"{U}ber die Charaktere der Symmetrischen Gruppe}. Sitzungsberichte der k\"{o}niglich preu\ss ischen Akademie der Wissenschaften zu Berlin (1900).
\bibitem[M]{[M]} Macdonald, I.G., \textsc{Symmetric Functions and Hall Polynomials}, 2nd edition, Oxford university press. (1998)
\bibitem[Ra]{[Ra]} Rasala, R., \textsc{On the Minimal Degrees of the Characters of $S_{n}$}, J. Algebra, vol 45, 132--181 (1977).
\bibitem[Ro]{[Ro]} Roichman, Y., \textsc{A recursive rule for Kazhdan-Lusztig characters}, Adv. Math,, vol 129 issue 1, 25--45 (1997).
\bibitem[P]{[P]} Prasad, A., \textsc{The Frobenius Characteristic of Character Polynomials}, J. Indian I. Sci., vol 102 issue 3, 947--959 (2022).

\end{thebibliography}
\end{document}